\documentclass[11pt,twoside,a4paper]{amsart}
\usepackage[utf8]{inputenc}
\usepackage{ dsfont }
\usepackage{amsmath,amssymb}
\usepackage{mathrsfs}
\usepackage{amsfonts}
\usepackage{extarrows}
\usepackage{mathtools}
\usepackage{microtype}
\usepackage{amsthm}
\usepackage{tikz-cd}
\usepackage[english]{babel}
\usepackage{xcolor}
\usepackage{graphicx}
\usepackage{url}
\usepackage[hidelinks]{hyperref}
\textwidth=450pt
\oddsidemargin=12pt
\evensidemargin=12pt

\setlength{\footskip}{25pt}

\newcommand{\p}{\mathbb{P}}
\newcommand{\Z}{\mathbb{Z}}
\newcommand{\R}{\mathbb{R}}
\newcommand{\Q}{\mathbb{Q}}
\newcommand{\C}{\mathbb{C}}

\newcommand{\KS}{\mathrm{KS}}
\newcommand{\dRB}{\mathrm{dRB}}
\newcommand{\Cl}{\mathrm{Cl}}
\newcommand{\la}{\lambda_{\psi}}

\theoremstyle{plain} 
\newtheorem{thm}{Theorem}[section] 

\newtheorem{cor}[thm]{Corollary} 
\newtheorem{lem}[thm]{Lemma} 
\newtheorem{prop}[thm]{Proposition}
\newtheorem{conj}[thm]{Conjecture} 
\theoremstyle{definition} 
\newtheorem{defn}[thm]{Definition}
\newtheorem{exmp}[thm]{Example}
\pagestyle{plain}
\theoremstyle{remark} 
\newtheorem{rmk}[thm]{Remark}

\begin{document}

\title[Hodge similarities, algebraic classes, and Kuga--Satake varieties]{Hodge similarities, algebraic classes, and Kuga-Satake varieties}  
\author[M. Varesco]{Mauro Varesco}
\address{Mathematisches Institut, Universit\"at Bonn, Endenicher Allee 60, 53115 Bonn, Germany}
\email{varesco@math.uni-bonn.de}

\begin{abstract}
We introduce in this paper the notion of Hodge similarities of transcendental lattices of hyperk\"ahler manifolds and 
investigate the Hodge conjecture for these Hodge morphisms. Studying K3 surfaces with a symplectic automorphism, we prove the Hodge conjecture for the square of the general member of the first four-dimensional families of K3 surfaces with totally real multiplication of degree two.
We then show the functoriality of the Kuga--Satake construction with respect to Hodge similarities. This implies that, if the Kuga--Satake Hodge conjecture holds for two hyperk\"ahler manifolds, then every Hodge similarity between their transcendental lattices is algebraic after composing it with the Lefschetz isomorphism. In particular, we deduce that Hodge similarities of transcendental lattices of hyperk\"ahler manifolds of generalized Kummer deformation type are algebraic.
\end{abstract}
\maketitle

\section*{Introduction}

\subsection{Hyperk\"ahler manifolds and the Hodge conjecture.}
Let $X$ be a hyperk\"ahler manifold, and let $T(X)\subseteq H^2(X,\Q)$ be its \textit{transcendental lattice}, which is the orthogonal complement of the N\'eron--Severi group of $X$ in $H^2(X,\Q)$ with respect to the Beauville--Bogomolov quadratic form.
The relevance of this notion in the context of the Hodge conjecture can be evinced from the following observation: let $X$ and $Y$ be hyperk\"ahler manifolds. By Lefschetz $(1,1)$ theorem, a Hodge morphism $H^2(X,\Q)\rightarrow H^2(Y,\Q)$ is algebraic if and only if the induced Hodge morphism $T(X)\rightarrow T(Y)$ is algebraic. Recall that a Hodge morphism $H^2(X,\Q)\rightarrow H^2(Y,\Q)$ is said to be algebraic if the corresponding Hodge class in $H^{2n,2n}(X\times Y,\Q)$ is algebraic, where $2n$ is the dimension of $X$.

\smallskip
In general, it is not known whether Hodge morphisms of transcendental lattices are algebraic or not. However, there have been promising results for the class of Hodge isometries. Recall that a Hodge isomorphism $T(X)\rightarrow T(Y)$ is called a Hodge isometry if it is an isometry with respect to the Beauville--Bogomolov quadratic forms on $X$ and $Y$. A result by Buskin \cite{buskin2019every} reproved by Huybrechts \cite{huybrechts2019motives} shows that Hodge isometries of transcendental lattices of projective K3 surfaces are algebraic. The same has been proven by Markman \cite{markman2022rational} for Hodge isometries of transcendental lattices of hyperk\"ahler manifolds of $\textrm{K3}^{[n]}$-type.

\smallskip
In this paper, we introduce a natural generalization of Hodge isometries which we call \textit{Hodge similarities}: a Hodge isomorphism is a Hodge similarity if it multiplies the quadratic form by a non-zero scalar called \textit{multiplier}, see Definition \ref{def. Hodge similarity}.
Note that Hodge isometries are Hodge similarities with multiplier one.
There are two contexts where Hodge similarities naturally appear. The main instance is given by hyperk\"ahler manifolds $X$ whose
endomorphism field $E\coloneqq\mathrm{End}_{\mathrm{Hdg}}(T(X))$ is a totally real field of degree two: indeed, every totally real field of degree two is isomorphic to $\Q(\sqrt{d})$ for some positive integer $d$. One then sees that $\sqrt{d}\colon T(X)\rightarrow T(X)$ is a Hodge similarity. This follows immediately from the fact that, as $E$ is totally real, the Rosati involution is the identity. Note that in this case $E$ is generated by Hodge similarities.
A second source of examples of Hodge similarities is the following: given a hyperk\"ahler manifold $X$, there might exist another hyperk\"ahler manifold $Y$ with transcendental lattice Hodge isometric to $T(X)(\lambda)$, for some $\lambda\in \Q_{>0}$, where $(\lambda)$ indicates that the quadratic form is multiplied by $\lambda$. The identity of $T(X)$ then defines a natural Hodge morphism $T(Y)\rightarrow T(X)$ which is a Hodge similarity. 
At the time of writing this paper, there are very few examples of Hodge similarities that are not isometries which can be proven to be algebraic. For example, in the case of K3 surfaces with totally real endomorphism field $E=\Q(\sqrt{d})$, the algebraicity of $\sqrt{d}$ has been proven only for some one-dimensional families of such K3 surfaces. This is a result by Schlickewei \cite{schlickewei2010hodge} which has then been extended in \cite{varesco2022hodge}.  Note that the proof in the references involves the study of the Hodge conjecture for Kuga--Satake variety of these K3 surfaces, and does not use the fact that $E$ is in these cases generated by Hodge similarities.

\subsection{Hodge similarities of K3 surfaces and symplectic automorphisms}
Recall that the Hodge conjecture for the product of two K3 surfaces $X$ and $Y$ to the algebraicity of the elements of $\mathrm{Hom}_{\mathrm{Hdg}}(T(X),T(Y))$. This follows from the K\"unneth decomposition and the fact that the quadratic form  $q_X$ identifies $(T(X)\otimes T(Y))^{2,2}\cap (T(X)\otimes T(Y))$ with $\mathrm{Hom}_{\mathrm{Hdg}}(T(X),T(Y))$. As mentioned above, Hodge isometries between the transcendental lattices of two K3 surfaces are known to be algebraic. In particular, the Hodge conjecture holds for $X\times Y$ whenever $\mathrm{Hom}_{\mathrm{Hdg}}(T(X),T(Y))$ is generated by Hodge isometries. This is the case when $T(X)$ and $T(Y)$ are Hodge isometric and $\mathrm{Hom}_{\mathrm{Hdg}}(T(X),T(Y))$ is $\Q$ or a CM field.

\smallskip
The main result of Section \ref{Sec. K3 with a symplectic autom.} is the proof of the algebraicity of some Hodge similarities for some families of K3 surfaces with totally real multiplication of degree two:
\begin{thm}[Theorem \ref{thm.: algebraicity of sqrt p}, \ref{thm. Main thm for Nik. inv}, and \ref{main thm for order three}]
\label{thm: Intro, algebraic similarities and symplectic}
Let $X$ be a K3 surface Hodge isometric to a K3 surface with a symplectic automorphism of order $p$ with $p=2,3$. Assume furthermore that $\Q(\sqrt{p})$ is contained in the endomorphism field of $X$. Then, $\sqrt{p}\colon T(X)\rightarrow T(X)$ is algebraic.
In particular, the Hodge conjecture for $X\times X$ holds if $\mathrm{End}_{\mathrm{Hdg}}(T(X))\simeq\Q(\sqrt{p}).$
\end{thm}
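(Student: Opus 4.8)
The plan is to reduce everything to the algebraicity of Hodge isometries of transcendental lattices of projective K3 surfaces, which is exactly the Buskin--Huybrechts theorem recalled above. First I would dispose of the ``Hodge isometric'' hypothesis. Let $X'$ be a K3 surface carrying a symplectic automorphism $\phi$ of order $p$, together with a Hodge isometry $\theta\colon T(X)\to T(X')$. By Buskin--Huybrechts $\theta$ is algebraic, and under $\theta$ the element $\sqrt{p}\in\mathrm{End}_{\mathrm{Hdg}}(T(X))$ corresponds to an endomorphism $\alpha$ of $T(X')$ with $\alpha^2=p$, so $\alpha=\pm\sqrt{p}$. Since $\sqrt{p}=\theta^{-1}\circ\alpha\circ\theta$ and both $\theta,\theta^{-1}$ are algebraic, it suffices to prove that $\sqrt{p}\colon T(X')\to T(X')$ is algebraic. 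From now on I would work with $X'$ and its symplectic automorphism.

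The geometric input is the symplectic quotient. Set $\bar X\coloneqq X'/\langle\phi\rangle$, which has rational double points of type $A_{p-1}$ at the fixed locus of $\phi$, and let $\rho\colon Y\to\bar X$ be the minimal resolution, so that $Y$ is again a projective K3 surface. By Nikulin's theorem $\phi$ acts trivially on $T(X')$, hence the transcendental lattice lies in the $\phi$-invariant part of $H^2(X',\Q)$; combining the pullback $\pi^*$ along the degree-$p$ quotient map $\pi\colon X'\to\bar X$ with $\rho_*$ yields an algebraic morphism of Hodge structures
\[
f^*\coloneqq\pi^*\circ\rho_*\colon T(Y)\longrightarrow T(X').
\]
A rank count (using that $\phi$ acts trivially on $T(X')$ and that the exceptional curves of $\rho$ are algebraic) shows $f^*$ is an isomorphism of rational Hodge structures, while the projection formula for the degree-$p$ map $\pi$, together with the fact that the $\rho$-exceptional classes are orthogonal to the transcendental part, gives $q_{X'}(f^*y)=p\,q_Y(y)$. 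Thus $f^*$ is an algebraic Hodge similarity of multiplier $p$.

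The crux is then a bookkeeping of multipliers. Since $E=\Q(\sqrt{p})$ is totally real the Rosati involution is trivial, so $\sqrt{p}$ is self-adjoint and $q_{X'}(\sqrt{p}\,x)=p\,q_{X'}(x)$; that is, $\sqrt{p}$ is itself a similarity of multiplier $p$. Consequently the Hodge isomorphism
\[
\psi\coloneqq (f^*)^{-1}\circ\sqrt{p}\colon T(X')\longrightarrow T(Y)
\]
has multiplier $(1/p)\cdot p=1$, so $\psi$ is a Hodge \emph{isometry} between the transcendental lattices of the two projective K3 surfaces $X'$ and $Y$. By Buskin--Huybrechts $\psi$ is algebraic, whence $\sqrt{p}=f^*\circ\psi$ is a composition of algebraic correspondences and is therefore algebraic. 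The hard part is concentrated in this final invocation: constructing $\psi$ is elementary once $f^*$ is in hand, but its algebraicity is precisely the deep Buskin--Huybrechts theorem; what the argument contributes is the observation that the similarity $\sqrt{p}$ of multiplier $p$ can be converted into an \emph{isometry} by the algebraic similarity $f^*$ arising from the symplectic quotient. Note also that this step is uniform in $p$, the restriction to $p=2,3$ entering only through the existence of the relevant families.

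Finally, for the last assertion I would argue as in the introduction. If $\mathrm{End}_{\mathrm{Hdg}}(T(X))\simeq\Q(\sqrt{p})$, then the Hodge conjecture for $X\times X$ is equivalent to the algebraicity of every element of $\mathrm{Hom}_{\mathrm{Hdg}}(T(X),T(X))=\mathrm{End}_{\mathrm{Hdg}}(T(X))=\Q\cdot\mathrm{id}\oplus\Q\cdot\sqrt{p}$. The identity is algebraic, being the class of the Lefschetz correspondence, and $\sqrt{p}$ is algebraic by the first part; hence the entire Hom-space is algebraic and the Hodge conjecture for $X\times X$ follows.
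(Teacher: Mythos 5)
Your proposal follows essentially the same route as the paper: reduce to the case of an actual symplectic automorphism via Buskin--Huybrechts, construct the algebraic Hodge similarity of multiplier $p$ from the quotient-and-resolve diagram (the paper writes it as $\beta_*\pi^*$ through the blowup $\widetilde{X}$, you write the equivalent $\pi^*\circ\rho_*$ through the singular quotient), observe that $\sqrt{p}$ is itself a similarity of multiplier $p$ because it is Rosati-fixed, and conclude that their composition is a Hodge isometry, hence algebraic. The only nitpick is that you justify the Rosati-fixedness assuming $E=\Q(\sqrt{p})$ rather than the stated hypothesis $\Q(\sqrt{p})\subseteq E$, but the argument goes through unchanged since a totally real subfield of $E$ is always fixed by the Rosati involution (cf.\ Remark \ref{rmk. similarity is a squareroot}).
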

The condition ``$X$ is Hodge isometric to a K3 surface with a symplectic automorphism of order $p$" is equivalent to $T(X)\hookrightarrow U^3_\Q\oplus E_8(-2)_\Q$ for $p=2$ and to $T(X)\hookrightarrow U^3_\Q\oplus (A_2)^2_\Q$ for $p=3$. This is deduced in Proposition \ref{prop.: criterion for Nikulin involution} and Proposition \ref{prop.: criterion for symp. autom. of order 3} from the classical result by Nikulin \cite{Nikulin79}, van Geemen and Sarti \cite{van2007nikulin}, and Garbagnati and Sarti \cite{garbagnati2007symplectic}.
Using these conditions on the transcendental lattice, we show that the families of K3 surfaces satisfying the hypotheses of Theorem \ref{thm: Intro, algebraic similarities and symplectic} are at most four-dimensional for $p=2$ and two-dimensional for $p=3$. We then produce examples of such maximal-dimensional families in Proposition \ref{prop. 4-dim family of K3 admitting a Nik inv} and Proposition \ref{prop. existence of similarity of multiplier 3}.
In particular, Theorem \ref{thm: Intro, algebraic similarities and symplectic} provides the first four-dimensional families of K3 surfaces with totally real multiplication of degree two for which the Hodge conjecture can be proven for the square of its general member and the first two-dimensional family of K3 surfaces with totally real multiplication of degree two for which the Hodge conjecture can be proven for the square of all its members.

\subsection{Kuga--Satake varieties and Hodge similarities}
In Section \ref{sec. KS varieties}, we prove that the functoriality of the Kuga--Satake construction with respect to Hodge isometries extends to Hodge similarities in the following sense:
\begin{prop}[Proposition 
\ref{prop. similarities and KS}]
\label{prop. Intro similarities an KS}
Let $\psi\colon (V,q)\rightarrow (V',q')$ be a Hodge similarity of polarized Hodge structures of K3-type. Then, there exists an isogeny of abelian varieties $\psi_{\KS}\colon \mathrm{KS}(V)\rightarrow\KS(V')$ making the following diagram commute
\[
\begin{tikzcd}
V \arrow[d, hook] \arrow[r, "\psi"] & V' \arrow[d, hook] \\
H^1(\KS(V),\Q)^{\otimes 2} \arrow[r, "({\psi_{\KS}})_*^{\otimes 2}"]      &  H^1(\KS(V'),\Q)^{\otimes 2}
\end{tikzcd},
\]
where the vertical arrows are the Kuga--Satake correspondence for $V$ and $V'$.
\end{prop}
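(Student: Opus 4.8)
The plan is to construct $\psi_{\KS}$ directly at the level of Clifford algebras, exploiting the elementary but crucial fact that a similarity becomes an isometry after an irrational rescaling which is nonetheless rational on the even part. Recall that $\KS(V)$ is built from the even Clifford algebra $\Cl^+(V,q)$: one sets $H^1(\KS(V),\Q)=\Cl^+(V,q)$ and equips it with the weight-one Hodge structure whose complex structure is left multiplication by $J_0:=e_1e_2$, where $e_1,e_2$ is an oriented $q$-orthonormal basis of the real positive-definite plane $P\subseteq V_\R$ underlying $V^{2,0}$; note $J_0^2=-q(e_1)q(e_2)=-1$. Write $\la$ for the multiplier of $\psi$, so that $q'(\psi(x))=\la\, q(x)$.

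Over $\R$, since $\la>0$, I define a linear map on generators by $v\mapsto \la^{-1/2}\psi(v)$ for $v\in V$. This extends to an isomorphism of real Clifford algebras $\Phi\colon\Cl(V,q)_\R\to\Cl(V',q')_\R$: indeed $\Phi(v)^2=\la^{-1}q'(\psi(v))=q(v)$, which is the Clifford relation. Furthermore $\Phi$ carries $J_0$ to the element $J_0'$ defining the Hodge structure on $\Cl^+(V',q')$, because $\psi(e_1),\psi(e_2)$ is $q'$-orthogonal with $q'(\psi(e_i))=\la$, so that $\la^{-1/2}\psi(e_1),\la^{-1/2}\psi(e_2)$ is $q'$-orthonormal and $\Phi(J_0)=\Phi(e_1)\Phi(e_2)=J_0'$. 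Hence $\Phi$ intertwines the two Kuga--Satake complex structures and restricts to an isomorphism of real weight-one Hodge structures $\Cl^+(V,q)_\R\to\Cl^+(V',q')_\R$.

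The apparent obstruction is that $\Phi$ involves $\la^{-1/2}\notin\Q$. The key observation is that, under the symbol identifications $\Cl^\bullet(V,q)\cong\bigwedge^\bullet V$ and likewise for $V'$, the map $\Phi$ sends $\bigwedge^p V$ to $\bigwedge^p V'$ by $\la^{-p/2}\,\bigwedge^p\psi$; on the even part $p=2k$ this is $\la^{-k}\,\bigwedge^{2k}\psi$, which is defined over $\Q$. Therefore $\Phi|_{\Cl^+}$ is a morphism of rational Hodge structures of weight one, and this is exactly the isogeny $\psi_{\KS}\colon\KS(V)\to\KS(V')$ we are after.

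It remains to check that the square commutes. The vertical maps arise from the embedding of Hodge structures $V\hookrightarrow\mathrm{End}(\Cl^+)$, $v\mapsto\bigl(x\mapsto v\,x\,e_0^{-1}\bigr)$ for a fixed $e_0\in V$ with $q(e_0)\neq 0$, followed by the isomorphism $\mathrm{End}(\Cl^+)\cong(\Cl^+)^{\otimes 2}$ induced by the Kuga--Satake polarization $E$. Since $\Phi$ is an algebra isomorphism, $\mathrm{Ad}_\Phi$ sends $x\mapsto vxe_0^{-1}$ to $x\mapsto\Phi(v)\,x\,\Phi(e_0)^{-1}=\psi(v)\,x\,\psi(e_0)^{-1}$: the factor $\la^{-1/2}$ from $\Phi(v)$ and the factor $\la^{+1/2}$ from $\Phi(e_0)^{-1}$ cancel, so on $V$ the conjugation $\mathrm{Ad}_\Phi$ is exactly $\psi$ (with the fixed vector $e_0'=\psi(e_0)$). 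Moreover, because $\Phi(J_0)=J_0'$ and $\Phi$ is an algebra isomorphism preserving the main anti-involution and the reduced trace, it preserves the polarization $E$, so the identification with $(\Cl^+)^{\otimes 2}$ is compatible with $\Phi^{\otimes 2}$ without any further scalar. Combining these, $(\psi_{\KS})_*^{\otimes 2}$ carries the image of $v$ to that of $\psi(v)$, which is the desired commutativity. I expect the main obstacle to be precisely this scalar bookkeeping: one must verify that the irrational factor $\la^{-1/2}$, living on the odd part of the Clifford algebra, enters the Kuga--Satake correspondence only in the cancelling combination above, and that normalizing the polarization by $J_0$ (rather than by an unnormalized bivector of the positive plane) is what keeps $E$ invariant under $\Phi$ and makes the square commute on the nose.
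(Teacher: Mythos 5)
Your proposal is correct and follows essentially the same route as the paper: the rescaled map on generators (your $\la^{-1/2}\psi$ over $\R$, whose restriction to the even part is the paper's rational map $(1/\la)^{m}\psi^{\otimes 2m}$ on $\bigotimes^{\mathrm{ev}}V$) induces an isomorphism of even Clifford algebras intertwining the complex structures $J\mapsto J'$, and the commutativity of the square is checked through the embedding $V\hookrightarrow\mathrm{End}(\Cl^+(V))$ exactly as you do, with the same cancellation of the $\la^{\pm 1/2}$ factors. The only point where the paper is more explicit is the polarization: it is preserved provided one takes on $\Cl^+(V')$ the form attached to the rescaled pair $(\psi f_1/\la,\psi f_2)$, which is the precise version of your trace-and-anti-involution argument.
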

In Section \ref{subsection dRB similarities}, we exploit the observation that a similarity of quadratic spaces induces an isomorphism of even Clifford algebras to extend the result by Kreutz, Shen, and Vial \cite{vial2023dRBconj} which shows that de Rham--Betti isometries between the second de Rham--Betti cohomology of two hyperk\"ahler manifolds defined over $\overline{\Q}$ are motivated in the sense of Andr\'e. We note in Proposition \ref{prop.: dRB similarities} that the same proof as in the reference can be used to show that de Rham--Betti similarities are motivated.

\smallskip

In Section \ref{Sec. implication of functoriality of KS}, we use the functoriality property of the Kuga--Satake construction proven in Proposition \ref{prop. Intro similarities an KS} to deduce the following:
\begin{thm}[Theorem \ref{main thm}]
\label{main thm Intro}
Let $X'$ and $X$ be two hyperk\"ahler manifolds for which the Kuga--Satake Hodge conjecture holds. 
Then, for every Hodge similarity $\psi\colon T(X')\rightarrow T(X)$, the composition 
\[
T(X')\xlongrightarrow{\psi} T(X)\xrightarrow{h^{2n-2}_X\cup\bullet} H^{4n-2}(X,\Q)
\]
is algebraic, where $2n\coloneqq \dim X$ and $h_X$ is the cohomology class of an ample divisor on $X$.
\end{thm}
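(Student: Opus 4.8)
The plan is to transport the statement to the Kuga--Satake abelian varieties, where the Kuga--Satake Hodge conjecture and the functoriality of Proposition~\ref{prop. Intro similarities an KS} make the relevant maps algebraic, and then to return to $X$ via the transpose of the Kuga--Satake correspondence. The Lefschetz operator $h_X^{2n-2}\cup\bullet$ will appear precisely because the polarization on $T(X)$ is computed by cupping with $h_X^{2n-2}$. Write $A'=\KS(T(X'))$ and $A=\KS(T(X))$, and let $\iota_{X'}\colon T(X')\hookrightarrow H^1(A',\Q)^{\otimes 2}$ and $\iota_X\colon T(X)\hookrightarrow H^1(A,\Q)^{\otimes 2}$ be the two Kuga--Satake correspondences.

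First I would record the consequences of the hypotheses. By the Kuga--Satake Hodge conjecture for $X'$, the embedding $\iota_{X'}$ is algebraic. The isogeny $\psi_{\KS}\colon A'\to A$ of Proposition~\ref{prop. Intro similarities an KS} induces the algebraic map $(\psi_{\KS})_*^{\otimes 2}$, since the graph of a morphism of abelian varieties, and hence its tensor powers, are algebraic. Composing these and invoking the commutativity of the diagram in Proposition~\ref{prop. Intro similarities an KS}, I obtain that
\[
\iota_X\circ\psi \;=\; (\psi_{\KS})_*^{\otimes 2}\circ\iota_{X'}\colon\; T(X')\longrightarrow H^1(A,\Q)^{\otimes 2}
\]
is algebraic.

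It then remains to cancel $\iota_X$ up to the Lefschetz isomorphism. Here I would use the Kuga--Satake Hodge conjecture for $X$, which makes $\iota_X$ algebraic, say induced by a cycle $Z$ on $X\times A\times A$; its transpose ${}^{t}Z$ is algebraic as well. Since $\psi$ is a Hodge similarity and the Kuga--Satake embedding respects polarizations up to a non-zero scalar, the pairing that $\iota_X$ sees on $H^1(A,\Q)^{\otimes 2}$ is, up to a rational constant, the one induced by cup product on $A\times A$; this is realized via Poincar\'e duality by an algebraic operator $L$ on $H^*(A\times A,\Q)$, namely cup product with a power of an ample class (more generally, the Lefschetz operators on the abelian variety $A\times A$ and their inverses are algebraic by Lieberman's theorem). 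Setting $\Psi\coloneqq {}^{t}Z\circ L\circ \iota_X\colon T(X)\to H^{4n-2}(X,\Q)$, which is algebraic, a degree count shows $\Psi$ lands in $H^{4n-2}(X,\Q)$, and the defining property of $\Psi$ together with the Fujiki relation gives, for all $v,w\in T(X)$,
\[
\int_X v\cup \Psi(w)\;=\;\mu\,\langle \iota_X(v),\iota_X(w)\rangle\;=\;c\int_X v\cup w\cup h_X^{2n-2}
\]
with $\mu,c\in\Q^{\times}$, using that the Beauville--Bogomolov form on $T(X)$ is proportional to $(v,w)\mapsto\int_X v\cup w\cup h_X^{2n-2}$. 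By non-degeneracy of Poincar\'e duality this identifies $\Psi$ with $c\,(h_X^{2n-2}\cup\bullet)$ as maps $T(X)\to H^{4n-2}(X,\Q)$.

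Finally I would combine the two halves:
\[
c\,\bigl(h_X^{2n-2}\cup\bullet\bigr)\circ\psi\;=\;\Psi\circ\psi\;=\;{}^{t}Z\circ L\circ(\iota_X\circ\psi),
\]
whose right-hand side is a composition of algebraic maps by the two previous paragraphs; since $c\neq 0$ and algebraic classes form a $\Q$-vector space, the composition $(h_X^{2n-2}\cup\bullet)\circ\psi$ is algebraic. I expect the main obstacle to be the middle step: matching the geometric transpose of the Kuga--Satake correspondence with the Hodge-theoretic adjoint of $\iota_X$, and tracking the operator $h_X^{2n-2}$ that the Fujiki relation inserts because the polarization of $T(X)$ lives on $H^2$ rather than in middle degree. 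This is also the structural reason why one obtains $\psi$ only up to the Lefschetz isomorphism: inverting $h_X^{2n-2}\cup\bullet$ on $X$ would amount to the Lefschetz standard conjecture for $X$, which is not assumed. For K3 surfaces ($n=1$) the operator is the identity, and one recovers the algebraicity of $\psi$ itself.
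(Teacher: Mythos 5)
Your overall architecture is the same as the paper's: the functoriality square $\kappa_X\circ\psi=(\psi_{\KS})^{\otimes 2}_*\circ\kappa_{X'}$ makes $\kappa_X\circ\psi$ algebraic, and one then returns to $X$ via the transpose of the Kuga--Satake cycle composed with a Lefschetz operator on the abelian variety side, identifying the resulting endomorphism-like map of $T(X)$ with a nonzero rational multiple of $h_X^{2n-2}\cup\bullet$. The last identification is exactly the paper's Lemma \ref{lem.: commutative square}, and it is the one place where your argument has a genuine gap: you assert that $\langle\iota_X(v),\iota_X(w)\rangle = c\int_X v\cup w\cup h_X^{2n-2}$ for a single scalar $c\in\Q^\times$, justified by ``the Kuga--Satake embedding respects polarizations up to a non-zero scalar.'' A priori this is not true as stated. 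The pullback under $\kappa_X$ of the pairing induced by $L$ on $H^1(\KS(X),\Q)^{\otimes 2}$ is some symmetric morphism of Hodge structures $T(X)\otimes T(X)\to\Q(-2)$, hence of the form $(v,w)\mapsto q_X(ev,w)$ for a Rosati-fixed element $e\in\mathrm{End}_{\mathrm{Hdg}}(T(X))$; when the endomorphism field is a totally real field of degree greater than one --- precisely the situation this paper cares about --- such an $e$ need not be a rational scalar, and your Poincar\'e-duality argument then only identifies $\Psi$ with $(h_X^{2n-2}\cup\bullet)\circ e$, which does not yield the statement. Moreover, the constant (or endomorphism) could vanish for an unluckily chosen ample class $L$ on $\KS(X)^2$, and nonvanishing also needs an argument.

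The paper closes both holes abstractly: nonvanishing follows from the Hodge--Riemann relations applied to $\langle\kappa_X(\omega),(h_{\KS}^{2N-2}\cup\bullet)\kappa_X(\overline\omega)\rangle$ for the symplectic class $\omega$, and the scalar property is obtained by deforming $(X,h_X)$ to a Mumford--Tate general member, where $\mathrm{End}_{\mathrm{Hdg}}(T(X))=\Q$ forces $e\in\Q$, and then specializing back since all maps involved deform in families. Alternatively one could try to prove your assertion by an explicit Clifford-algebra trace computation, but that would require taking $L$ to be the specific polarization $Q$ produced by the Kuga--Satake construction rather than an arbitrary ample class, and you have not done this. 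So the route is the right one, but the middle step needs either the deformation argument or the explicit computation to be a proof.
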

By a result of Voisin \cite{voisin2022footnotes} based on previous results by Markman \cite{markman2022monodromy} and O'Grady \cite{O'grady}, the Kuga--Satake Hodge conjecture holds for hyperk\"ahler manifolds of generalized Kummer type. This is the main source of examples of manifolds which satisfy the hypotheses of Theorem \ref{main thm Intro}. As the Lefschetz standard conjecture in degree two for these manifolds is proved by Foster \cite{foster2023lefschetz}, Theorem \ref{main thm Intro} shows that Hodge similarities between the transcendental lattices of two hyperk\"ahler manifolds of generalized Kummer type are algebraic. Using the fact that the endomorphism field of these varieties is always generated by Hodge similarities, we then conclude the following:
\begin{thm}[Theorem \ref{thm. similarities and gen kummer}]
\label{intro Thm similarities and general kummer}
Let $X$ and $X'$ be hyperk\"ahler manifolds of generalized Kummer type such that $T(X)$ and $T(X')$ are Hodge similar. Then, every Hodge morphism between $T(X')$ and $T(X)$ is algebraic.
\end{thm}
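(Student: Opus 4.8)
The plan is to reduce the statement to two algebraicity facts — that a single Hodge similarity is algebraic, and that the whole endomorphism field is algebraic — and then to propagate these to every Hodge morphism using the irreducibility of transcendental lattices. Write $E\coloneqq \mathrm{End}_{\mathrm{Hdg}}(T(X))$. Since $T(X)$ and $T(X')$ are irreducible rational Hodge structures, any nonzero Hodge morphism $T(X')\to T(X)$ has zero kernel and full image, hence is an isomorphism. By hypothesis a Hodge similarity $\psi_0\colon T(X')\to T(X)$ exists, and it is in particular such an isomorphism. Composing with $\psi_0^{-1}$ therefore identifies $\mathrm{Hom}_{\mathrm{Hdg}}(T(X'),T(X))$ with $E$, so that every Hodge morphism $T(X')\to T(X)$ is of the form $e\circ\psi_0$ with $e\in E$, and symmetrically for morphisms in the other direction. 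As algebraic correspondences are closed under composition, it thus suffices to show that $\psi_0$ is algebraic and that every $e\in E$ is algebraic.

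First I would establish that Hodge similarities are algebraic in this setting. By the result of Voisin \cite{voisin2022footnotes}, resting on Markman and O'Grady, the Kuga--Satake Hodge conjecture holds for hyperk\"ahler manifolds of generalized Kummer type, so both $X$ and $X'$ satisfy the hypotheses of Theorem \ref{main thm Intro}. Applying that theorem to any Hodge similarity $\psi$ between transcendental lattices of such manifolds --- including $\psi_0$ and any Hodge similarity $T(X)\to T(X)$ --- shows that the composition of $\psi$ with the hard Lefschetz isomorphism $h^{2n-2}_X\cup\bullet$ is algebraic. The Lefschetz standard conjecture in degree two, proven for these manifolds by Foster \cite{foster2023lefschetz}, provides an algebraic inverse to this isomorphism; composing, we conclude that $\psi$ itself is algebraic, i.e. its class in $H^{4n}(X'\times X,\Q)$ is algebraic. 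In particular $\psi_0$ is algebraic.

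It remains to prove that every $e\in E$ is algebraic. The set of algebraic self-correspondences of $T(X)$ is closed under composition and under $\Q$-linear combinations of cycle classes, hence forms a $\Q$-subalgebra of $E$. By the previous paragraph every Hodge similarity lying in $E$ is algebraic, and by the cited description of the endomorphism field of generalized Kummer type manifolds the field $E$ is generated as a $\Q$-algebra by its Hodge similarities. Consequently $E$ is contained in this subalgebra, so every element of $E$ is algebraic. Combining this with the algebraicity of $\psi_0$ and the identification $\mathrm{Hom}_{\mathrm{Hdg}}(T(X'),T(X))=E\circ\psi_0$ yields the claim.

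The genuinely new step, and the one I would treat most carefully, is the passage from algebraicity of the individual Hodge similarities to algebraicity of the entire field $E$: the hypothesis that $E$ is \emph{generated} by Hodge similarities is exactly what allows the ring-closure of algebraic correspondences to bridge the gap. The heavy input --- the algebraicity of one similarity --- is delegated entirely to Theorem \ref{main thm Intro}, Voisin's Kuga--Satake Hodge conjecture, and Foster's Lefschetz standard conjecture, so the remaining obstacle is purely checking that these hypotheses genuinely apply to generalized Kummer type and that the multiplicative structure of $E$ is compatible with composition of cycle classes.
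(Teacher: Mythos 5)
Your proposal is correct and follows essentially the same route as the paper: algebraicity of each Hodge similarity via Theorem \ref{main thm Intro} combined with Foster's Lefschetz standard conjecture in degree two, then the identification $\mathrm{Hom}_{\mathrm{Hdg}}(T(X'),T(X))=E\circ\psi_0$ together with the fact that $E$ is generated by Hodge similarities and that algebraic correspondences form a $\Q$-algebra under composition and linear combination. The only point you leave as a citation is why $E$ is generated by Hodge similarities; the paper derives this from the bound $\dim T(X)\le 6$ for generalized Kummer type, which forces $E$ to be either CM (generated by Hodge isometries) or totally real of degree at most two (generated by the similarity $\sqrt{d}$).
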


Note that, opposed to the case of K3 surfaces and hyperk\"ahler manifolds of K3$^{[n]}$-type, already the algebraicity of Hodge isometries was not known in the case of hyperk\"ahler manifolds of generalized Kummer type. Furthermore, note that Theorem \ref{intro Thm similarities and general kummer} also applies for hyperk\"ahler manifolds of generalized Kummer type of different dimension.

\smallskip
In the case of K3 surfaces, the Lefschetz standard conjecture is trivially true. Hence, if the Kuga--Satake Hodge conjecture holds for two given K3 surfaces, Theorem \ref{main thm Intro} shows that every Hodge similarity between their transcendental lattices is algebraic. In particular, this provides a more direct proof of the Hodge conjecture for the square of the K3 surfaces in the one-dimensional families of K3 surfaces with totally real field of degree two studied in \cite{schlickewei2010hodge, varesco2022hodge} that we mentioned above.

\smallskip
For hyperk\"ahler manifolds of $\mathrm{K3}^{[n]}$-type, the Kuga--Satake Hodge conjecture is known only for certain families: the paper \cite{floccari2022sixfolds} proves this conjecture for countably many four-dimensional families of $\mathrm{K3}^{[3]}$-type hyperk\"ahler manifolds. Recall that, for hyperk\"ahler manifolds of $\mathrm{K3}^{[n]}$-type, the Lefschetz standard conjecture has been proven by Charles and Markman \cite{charles2013standard}. Therefore, we deduce the algebraicity of Hodge similarities for the  hyperk\"ahler manifolds of $\mathrm{K3}^{[3]}$-type appearing in \cite{floccari2022sixfolds}.

\smallskip
As a final remark, note that the manifolds $X$ and $X'$ as in Theorem \ref{main thm Intro} are neither assumed to be of the same deformation type nor of the same dimension.

\section*{Acknowledgements}
I would like to thank my PhD.\ supervisor Daniel Huybrechts for the corrections and the numerous suggestions on this article. I am also grateful to Charles Vial for reviewing a preliminary version of this paper and for suggesting me the application of similarities in the context of De Rham--Betti classes. I am furthermore thankful to Alice Garbagnati and Giacomo Mezzedimi for their help on lattice theory and to Eyal Markman for referring me to the article \cite{foster2023lefschetz}.
This research was funded by ERC Synergy Grant HyperK, Grant
agreement ID 854361.

\section{Main definitions}
\label{First definitions}
In this paper, all varieties are assumed to be projective. Unless otherwise stated, the definition field of the varieties we consider is $\C$.

\smallskip
A \textit{hyperk\"ahler manifold} is a simply connected, projective, compact, K\"ahler manifold $X$ such that $H^0(X,\Omega_X^2)$ is generated by a nowhere degenerate symplectic form. Denote by $q_X$ the \textit{Beauville--Bogomolov quadratic form}, which is a non-degenerate quadratic form
on $H^2(X,\Q)$. Recall that $q_X$ induces the following direct sum decomposition
\[
H^2(X,\Q)=\mathrm{NS}(X)_\Q\oplus T(X),
\]
where $\mathrm{NS}(X)$ is the N\'eron--Severi group of $X$ and $T(X)$ is the transcendental lattice of $X$. When talking about the transcendental lattice of a hyperk\"ahler manifold $X$ we will always refer to the rational quadratic subspace $T(X)$ of $H^2(X,\Q)$.
The pair $(T(X),-q_X)$ gives an example of polarized Hodge structures of K3-type:
\begin{defn}
A rational Hodge structure $V$ of weight two is called of \textit{K3-type} if 
\[
\dim_\C V^{2,0}=1, \text{ and } V^{p,q}=0 \text{ for } |p-q|>2.
\]
Moreover, we say that a pair $(V,q)$ is a \textit{polarized Hodge structure of K3-type} if
$q\colon V\otimes V\rightarrow \Q(-2)$ is a morphism of Hodge structures whose
real extension is negative definite on $(V^{2,0}\oplus V^{0,2})\cap V_\R$ and has signature $(\dim V-2,2)$.
\end{defn}
Let $E\coloneqq\mathrm{End}_{\mathrm{Hdg}}(T(X))$ be the endomorphism algebra of the Hodge structure $T(X)$. As $T(X)$ is an irreducible Hodge structure, $E$ is a field. As explained in \cite[Thm.\ 3.3.7]{huybrechts2016lectures}, $E$ is either totally real or CM.
Recall that a field extension $E$ of $\Q$ is totally real if every embedding $E\hookrightarrow \C$ has image contained in $\R$, and it is CM if $E=F(\rho)$, where $F$ is a totally real field and $\rho$ satisfies the following: 
\[
\sigma(\rho)^2\in \sigma(F)\cap \R_{<0},\quad \forall\sigma\colon E\hookrightarrow \C.
\]
These two cases can be distinguished by the action of the \textit{Rosati involution}, which is the involution on $E$ which sends an element $e\in E$ to the element $e'\in E$ such that
\[
q_X(ev,w)=q_X(v,e'w),\quad \forall v,w\in T(X).
\]
The Rosati involution is the identity if $E$ is totally real, and it acts as complex conjugation if $E$ is CM.

\smallskip
As mentioned in the introduction, we focus in this paper on the notion of Hodge similarities:
\begin{defn}
\label{def. Hodge similarity}
Let $(V,q_V)$ and $(V',q_{V'})$ be polarized Hodge structures of K3-type, and let $\psi\colon V\rightarrow V'$ be a Hodge isomorphism. We say that $\psi$ is a \textit{Hodge similarity} if there exists a non-zero $\la\in \Q$ such that
\[
q_{V'}(\psi v,\psi w)=\la q_V(v,w),\quad \forall v,w\in V.
\]
We call $\la$ the \textit{multiplier} of $\psi$.
A \textit{Hodge isometry} is a Hodge similarity $\psi$ of multiplier $\lambda_\psi=1$.
\end{defn}
We say that two hyperk\"ahler manifolds are \textit{Hodge similar} (resp., \textit{Hodge isometric}) if there exists a Hodge similarity (resp., a Hodge isometry) between their transcendental lattices.
Note that the multiplier of a Hodge similarity is always a positive number.

\section{Symplectic automorphisms and algebraic Hodge similarities}
\label{Sec. K3 with a symplectic autom.}
Let $X$ be a K3 surface, and denote by $q$ the polarization on $T(X)$ given by the negative of the intersection form.
Identifying $T(X)$ with its dual via $q$, we see that
\[
\mathrm{End}_{\mathrm{Hdg}}(T(X))\simeq \left(T(X)\otimes T(X)\right)^{2,2} \cap \left(T(X)\otimes T(X)\right).
\]
This shows that proving the Hodge conjecture for $X^2$ is equivalent to showing that every element of $\mathrm{End}_{\mathrm{Hdg}}(T(X))$ is algebraic. In this section, considering K3 surfaces with a symplectic automorphism, we produce examples of K3 surfaces $X$ with $\Q(\sqrt{p})\subseteq\mathrm{End}_{\mathrm{Hdg}}(T(X))$ for which the Hodge similarity $\sqrt{p}$ can be shown to be algebraic.

\smallskip
The starting observation is the following: given a K3 surface $X$ with a symplectic automorphism of order $p$, there exists a K3 surface $Y$ and an algebraic Hodge similarity $\varphi\colon T(Y)\rightarrow T(X)$ of multiplier $p$. To show this, recall that, by \cite[Prop.\ 15.3.11]{huybrechts2016lectures}, the prime $p$ is at most $7$, the fixed locus of $\sigma_p$ is a finite union of points, and the minimal resolution of $X/\sigma_p$ is a K3 surface $Y$. 
Moreover, $Y$ can also be obtained as follows: after a finite sequence of blowups of $X$ at the fixed locus of $\sigma_p$, we get a variety $\widetilde{X}$ with a free action $\widetilde{\sigma}_p$ and $Y\simeq \widetilde{X}/\widetilde{\sigma}_p$. I.e., there is a commutative diagram
\[
\begin{tikzcd}
\widetilde{X} \arrow[d, "\pi"] \arrow[r, "\beta"] & X \arrow[d] \\
Y \arrow[r]  & X/\sigma_p
\end{tikzcd}.
\]
As $\pi\colon \widetilde{X}\rightarrow Y$ is a finite map of degree $p$ and $\beta\colon \widetilde{X}\rightarrow X$ just contracts the exceptional divisors, we see that 
\[
\varphi\coloneqq\beta_*\pi^*\colon T(Y)\rightarrow T(X)
\] is a Hodge similarity of multiplier $p$. Note that $\varphi$ is algebraic.
From this construction, we deduce the following:
\begin{thm}
\label{thm.: algebraicity of sqrt p}
Let $X$ be a K3 surface Hodge isometric to a K3 surface with a symplectic automorphism of prime order $p$. Assume furthermore that  $\Q(\sqrt{p})\subseteq \mathrm{End}_{\mathrm{Hdg}}(T(X))$. Then, the Hodge similarity $\sqrt{p}$ is algebraic.
\begin{proof}
As Hodge isometries of K3 surfaces are algebraic by \cite{buskin2019every} and \cite{huybrechts2019motives}, we may assume that $X$ admits a symplectic automorphism of order $p$. Let $\psi$ be the Hodge similarity of multiplier $p$ on $T(X)$, which exists since $\Q(\sqrt{p})\subseteq \mathrm{End}_{\mathrm{Hdg}}(T(X))$ by assumption.
As remarked above, denoting by $Y$ the minimal resolution of the quotient $X/\sigma_p$, the map $\varphi\coloneqq \beta_*\pi^*\colon T(Y)\rightarrow T(X)$ is a Hodge similarity of multiplier $p$. The composition $\varphi^{-1}\circ \psi\colon T(X)\rightarrow T(Y)$ is then a Hodge isometry. In particular, $\varphi^{-1}\circ \psi$ is algebraic by \cite{buskin2019every} and \cite{huybrechts2019motives}. As $\varphi$ is algebraic, we conclude that $\psi=\varphi\circ (\varphi^{-1}\circ \psi)$ is algebraic. This concludes the proof.
\end{proof}
\end{thm}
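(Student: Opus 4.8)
The plan is to cancel the multiplier of $\sqrt p$ against the algebraic Hodge similarity $\varphi\colon T(Y)\to T(X)$ of multiplier $p$ constructed just above the statement, thereby reducing the problem to the algebraicity of a Hodge \emph{isometry}, which is guaranteed by Buskin \cite{buskin2019every} and Huybrechts \cite{huybrechts2019motives}.

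First I would dispose of the isometry in the hypothesis. The endomorphism field is a Hodge-theoretic invariant, so a Hodge isometry $f\colon T(X)\to T(X_0)$, where $X_0$ carries the symplectic automorphism, conjugates $\sqrt p$ on $T(X)$ to $\pm\sqrt p$ on $T(X_0)$ (both roots of $x^2=p$ in the endomorphism field). Since $f$ and $f^{-1}$ are algebraic by \cite{buskin2019every,huybrechts2019motives} and algebraicity is stable under composition and change of sign, it suffices to treat the case in which $X$ itself admits a symplectic automorphism $\sigma_p$ of order $p$.

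Next comes the multiplier bookkeeping. As $\Q(\sqrt p)\subseteq\mathrm{End}_{\mathrm{Hdg}}(T(X))$ is totally real its Rosati involution is the identity, whence $q_X(\sqrt p\,v,\sqrt p\,w)=q_X(v,p\,w)=p\,q_X(v,w)$; thus $\psi\coloneqq\sqrt p$ is a Hodge similarity of multiplier $p$. The geometric map $\varphi=\beta_*\pi^*\colon T(Y)\to T(X)$ attached to the minimal resolution $Y$ of $X/\sigma_p$ is likewise an algebraic Hodge similarity of multiplier $p$, so $\varphi^{-1}$ has multiplier $1/p$. The composition $\varphi^{-1}\circ\psi\colon T(X)\to T(Y)$ then has multiplier $(1/p)\cdot p=1$ and is a Hodge isometry between the transcendental lattices of two K3 surfaces; by \cite{buskin2019every,huybrechts2019motives} it is algebraic. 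Since $\varphi$ is algebraic, I conclude that $\psi=\varphi\circ(\varphi^{-1}\circ\psi)$ is algebraic.

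The only genuinely hard input is the external theorem on algebraicity of Hodge isometries of K3 surfaces; everything else is formal. The crux of the argument, conceptual rather than computational, is the observation that $\sqrt p$ and the geometric similarity $\varphi$ share the same multiplier, so that composing with $\varphi^{-1}$ returns an isometry. The existence of $\varphi$ itself I would take as given, resting on the classical fact that the quotient $X/\sigma_p$ admits a K3 minimal resolution.
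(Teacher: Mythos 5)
Your proposal is correct and follows essentially the same route as the paper's proof: reduce to the case where $X$ itself carries the symplectic automorphism using the algebraicity of Hodge isometries, then observe that $\varphi^{-1}\circ\sqrt{p}$ is a Hodge isometry and hence algebraic, so $\sqrt{p}=\varphi\circ(\varphi^{-1}\circ\sqrt{p})$ is algebraic. The extra details you supply (the sign ambiguity under conjugation by a Hodge isometry, and the Rosati-involution computation of the multiplier) are correct elaborations of steps the paper treats as immediate.
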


\begin{rmk}
\label{rmk. similarity is a squareroot}
The two conditions ``$X$ is isometric to a K3 surface with a symplectic automorphisms of order $p$" and ``the endomorphisms field of $X$ contains $\Q(\sqrt{p})$" are not related. In fact, the general K3 surface with a symplectic automorphism of order $p$ has endomorphism field equal to $\Q$.
Moreover, note that the requirement ``the endomorphisms field of $X$ contains $\Q(\sqrt{p})$" is equivalent to the condition ``$X$ admits a Hodge similarity $\psi$ of multiplier $d$ which is fixed by the Rosati involution". Indeed, if $\psi$ such a Hodge similarity, then $\Q(\psi)$ is a totally real subfield of the endomorphism field of $X$. Using the fact that totally real fields have no non-trivial isometry, we see that $\psi^2/d$ is the identity, i.e., that $\Q(\psi)\simeq \Q(\sqrt{d})$.
\end{rmk}
In the remainder of this section, we construct families of K3 surfaces satisfying the hypotheses of Theorem \ref{thm.: algebraicity of sqrt p}. To do this, we use the following result is adapted from \cite[Sec.\ 3]{van2008real}, we give here a detailed proof for later use.
\begin{prop}
\label{prop. dimension of locus admitting Hodge similarities}
Let $d\in \Z$ be a positive integer which is not a square, and let $(\Lambda,q)$ be a rational quadratic space of signature $(2,\Lambda-2)$ with $\dim \Lambda>4$. Let $\psi$ be a similarity of $\Lambda$ of multiplier $d$ which is fixed by the Rosati involution. Then, $\Lambda$ is even-dimensional, and the locus of Hodge structures of K3-type on $\Lambda$ for which $\psi$ defines a Hodge 
similarity
is either empty or of dimension $(\dim \Lambda)/2-2$.
\begin{proof}
The first statement is immediate from the fact that odd-dimensional quadratic spaces do not admit any similarity of multiplier $d$ if $d$ is not a square.

\smallskip
Let us assume that $\Lambda$ is even-dimensional. As in Remark \ref{rmk. similarity is a squareroot}, we see that, for every Hodge structure on $\Lambda$ for which $\psi$ is a Hodge morphism, $\Q(\psi)\simeq \Q(\sqrt{d})$ is a totally real subfield of the endomorphism field of $\Lambda$. 

\smallskip
Note that $\Lambda$ can be viewed as a $\Q(\psi)$-vector space, that is
$\Lambda\simeq \Q(\psi)^{(\dim \Lambda)/2}$. The decomposition  $\Q(\psi)\otimes_\Q\R\simeq \R_{\sqrt{d}}\oplus\R_{-\sqrt{d}}$ into eigenspaces for the action of $\psi$ then induces a decomposition
\[
\Lambda_\R\simeq \Lambda_{\sqrt{d}}\oplus \Lambda_{-\sqrt{d}},
\]
where $\Lambda_{\sqrt{d}}\coloneqq\{v\in\Lambda_\R\mid  \psi v=\sqrt{d}v\}$ and similarly for $\Lambda_{-\sqrt{d}}$. Note that $\Lambda_{\sqrt{d}}$ and $\Lambda_{-\sqrt{d}}$ are both of dimension $(\dim \Lambda)/2$.
From the fact that $\psi$ is fixed by the Rosati involution, we deduce that this decomposition is orthogonal with respect to the quadratic form $q$ on $\Lambda_\R$.

\smallskip
Recall that giving a Hodge structure of K3-type on $\Lambda$ is equivalent to giving an element $\omega$ in the period domain
\[
\Omega_\Lambda\coloneqq\{\omega\in \p(\Lambda_\C)\mid  q(\omega)=0, q(\omega,\overline{\omega})>0\}.
\]
Note that $\psi$ defines a morphism of Hodge structures if and only if $\omega$ is an eigenvector.
Therefore, as  $\Lambda_\R\simeq \Lambda_{\sqrt{d}}\oplus \Lambda_{-\sqrt{d}}$ is orthogonal with respect to $q$
and $(\Lambda^{2,0}\oplus\Lambda^{0,2})\cap \Lambda_\R$ has to be positive definite, there exists a Hodge structure for which $\psi$ is a Hodge morphism if and only if $\Lambda_{-\sqrt{d}}$ is negative definite and $\Lambda_{\sqrt{d}}$ has signature $(2,(\dim\Lambda)/2-2)$ or vice versa.
Let us assume that $\psi$ satisfy this hypothesis. Then, up to changing the sign of $\psi$, we may assume that $\Lambda_{\sqrt{d}}$ has signature $(2,(\dim\Lambda)/2-2)$.
We conclude that $\psi$ defines a Hodge automorphism if and only if the Hodge structure corresponds to an element in 
\[
\{\omega\in \p\left((\Lambda_{\sqrt{d}})_\C\right)\mid  q(\omega)=0, q(\omega,\overline{\omega})>0\}.
\]
Therefore, the locus of Hodge structures on $\Lambda$ for which $\psi$ defines a Hodge morphism has dimension equal to
$\dim \Lambda_{\sqrt{d}}-2=(\dim \Lambda)/2-2$.
\end{proof}
\end{prop}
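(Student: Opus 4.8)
The plan is to reduce the statement to a signature computation on the two real eigenspaces of $\psi$. For the parity statement, I would argue purely by determinants: fixing a basis in which $q$ has Gram matrix $Q$ and $\psi$ has matrix $A$, the similarity relation $q(\psi v,\psi w)=d\,q(v,w)$ becomes $A^{\top}QA=dQ$. Taking determinants and cancelling $\det Q\neq 0$ gives $(\det A)^2=d^{\dim\Lambda}$. If $\dim\Lambda$ were odd this would present $d$ as the square of the rational number $\det A/d^{(\dim\Lambda-1)/2}$, hence (an integer that is a rational square is an integer square) a perfect square, contradicting the hypothesis on $d$. Thus $\Lambda$ is even-dimensional.

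Next I would extract the defining relation of $\psi$. Being fixed by the Rosati involution means $\psi$ is self-adjoint, $q(\psi v,w)=q(v,\psi w)$; combined with the similarity identity this gives $q(v,\psi^2 w)=q(\psi v,\psi w)=d\,q(v,w)$ for all $v,w$, so non-degeneracy of $q$ forces $\psi^2=d\cdot\mathrm{id}$. Since $d$ is not a square, $x^2-d$ is irreducible and $\Q(\psi)\simeq\Q(\sqrt d)$ is totally real of degree two, turning $\Lambda$ into a $\Q(\sqrt d)$-vector space of dimension $(\dim\Lambda)/2$. Tensoring with $\R$ and splitting $\Q(\sqrt d)\otimes_\Q\R\simeq\R\times\R$ along the two real places yields the eigenspace decomposition $\Lambda_\R=\Lambda_{\sqrt d}\oplus\Lambda_{-\sqrt d}$, each summand of dimension $(\dim\Lambda)/2$. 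Orthogonality is then forced by self-adjointness: for $v,w$ in opposite eigenspaces, $\sqrt d\,q(v,w)=q(\psi v,w)=q(v,\psi w)=-\sqrt d\,q(v,w)$, so $q(v,w)=0$.

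Finally I would pass to period domains. A K3-type Hodge structure on $\Lambda$ is a point $\omega\in\Omega_\Lambda$, and $\psi$ is a Hodge endomorphism exactly when $\omega$ is an eigenvector of $\psi_\C$. Since each $\Lambda_{\pm\sqrt d}$ is defined over $\R$, its complexification is conjugation-stable, so both $\omega$ and the positive plane $P=\langle\mathrm{Re}\,\omega,\mathrm{Im}\,\omega\rangle$ sit inside one real eigenspace. The polarization condition (positive definite on $P$, negative definite on $P^\perp$ in the signature-$(2,\dim\Lambda-2)$ form), together with the orthogonality of the decomposition, then pins down the eigenspace containing $\omega$ to have signature $(2,(\dim\Lambda)/2-2)$ and the other to be negative definite. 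After possibly replacing $\psi$ by $-\psi$, this is the condition that $\Lambda_{\sqrt d}$ have signature $(2,(\dim\Lambda)/2-2)$; if it fails there is no such Hodge structure and the locus is empty. When it holds the locus is precisely the period domain $\Omega_{\Lambda_{\sqrt d}}=\{\omega\in\p((\Lambda_{\sqrt d})_\C)\mid q(\omega)=0,\ q(\omega,\overline\omega)>0\}$ of the space $\Lambda_{\sqrt d}$, an open subset of a smooth quadric in $\p((\Lambda_{\sqrt d})_\C)$, of dimension $\dim\Lambda_{\sqrt d}-2=(\dim\Lambda)/2-2$.

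The genuinely delicate step is the signature bookkeeping in the last paragraph: one must check that the eigenvector constraint on $\omega$ together with the positivity of the polarization determines the signatures of the two eigenspaces uniquely up to interchange, so that the outcome is the clean dichotomy ``empty or of dimension $(\dim\Lambda)/2-2$'' with no intermediate strata; the remaining ingredients are elementary linear algebra over $\Q$ and $\R$.
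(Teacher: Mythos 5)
Your proposal is correct and follows essentially the same route as the paper: reduce to the orthogonal eigenspace decomposition $\Lambda_\R=\Lambda_{\sqrt d}\oplus\Lambda_{-\sqrt d}$ and identify the locus with the period domain of the eigenspace carrying the positive $2$-plane. You merely make explicit a few steps the paper delegates elsewhere (the determinant argument for even-dimensionality, the derivation of $\psi^2=d\cdot\mathrm{id}$ from self-adjointness and non-degeneracy rather than via the remark on totally real fields, and the orthogonality computation), all of which are correct.
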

\begin{rmk}
\label{rmk.: conditions on psi to be of Hodge type}
Let $\psi$ be a similarity of multiplier $d$ as in Proposition \ref{prop. dimension of locus admitting Hodge similarities}. From the proof of Proposition \ref{prop. dimension of locus admitting Hodge similarities}, we see that locus of Hodge structures of K3-type on $\Lambda$ for which $\psi$ is a Hodge similarity is non-empty (hence, of dimension $(\dim \Lambda)/2-2$) if and only if either $\Lambda_{\sqrt{d}}$ or $\Lambda_{-\sqrt{d}}$ is negative definite.
\end{rmk}
\smallskip
We use Proposition \ref{prop. dimension of locus admitting Hodge similarities} to show that the families of K3 surfaces which satisfy the hypotheses of Theorem \ref{thm.: algebraicity of sqrt p} are at most four-dimensional for $p=2$ and two-dimensional for $p=3$. Moreover, we produce examples of such families with these maximal dimensions. As we will see, no K3 surface satisfies the hypotheses of Theorem \ref{thm.: algebraicity of sqrt p} for higher values of $p$.

\smallskip
Let us start from the case $p=2$. Following \cite{van2007nikulin}, we call a symplectic involution on a K3 surface a \textit{Nikulin involution}.
By \cite[Prop.\ 2.2, 2.3]{van2007nikulin}, a K3 surface $X$ admits a Nikulin involution if and only if the lattice $E_8(-2)$ is primitively embedded in the N\'eron--Severi group of $X$. Note that, up to an automorphism of the K3-lattice, there exists a unique primitive embedding of $E_8(-2)$ in the K3-lattice. Therefore, we deduce from \cite[Sec.\ 1.3]{van2007nikulin} that $(E_8(-2))^{\perp}\simeq U^3\oplus E_8(-2)$. From this fact, we get the following criterion in terms of the transcendental lattice of $X$:
\begin{prop}
\label{prop.: criterion for Nikulin involution}
A K3 surface $X$ is Hodge isometric to a K3 surface admitting a Nikulin involution if and only if
$T(X)\subseteq U^3_\Q\oplus E_8(-2)_\Q$.\footnote{Thanks to G.\ Mezzedimi for the help with this argument.}
\begin{proof}
Let us first prove the ``only if" part. Let $X$ be a K3 surface such that $T(X)$ is Hodge isometric to $T(X')$ for some K3 surface $X'$ admitting a Nikulin involution.
By \cite[Prop.\ 2.2, 2.3]{van2007nikulin}, the lattice $E_8(-2)$ is primitively embedded in $\mathrm{NS}(X')$. Therefore, $\mathrm{NS}(X')^{\perp}\hookrightarrow E_8(-2)^{\perp}\simeq U^3\oplus E_8(-2)$.  Over $\Q$, we conclude that
\[
T(X)\simeq T(X')\hookrightarrow U^3_\Q\oplus E_8(-2)_\Q.
\]

\smallskip
For the ``if" part, let us assume that there is an embedding of quadratic spaces
\[
T(X)\hookrightarrow U^3_\Q\oplus E_8(-2)_\Q.
\]
Denote by $H^2(X,\Z)_{\mathrm{tr}}$ the transcendental part of the second integral cohomology of $X$. Clearing the denominators, we find a positive integer $\lambda\in \Z$ such that the above embedding restricts to an embedding of lattices
\[
j\colon \lambda H^2(X,\Z)_{\mathrm{tr}}\hookrightarrow U^3\oplus E_8(-2).
\]
Fix a primitive embedding $\iota\colon U^3\oplus E_8(-2)\hookrightarrow H^2(X,\Z)$ such that 
\[
\iota (U^3\oplus E_8(-2))^{\perp}\simeq E_8(-2).\]
Let $T'$ be the saturation of the lattice $(\iota \circ j)(H^2(X,\Z)_{\mathrm{tr}})\subseteq H^2(X,\Z)$.
For any K3 surface $X'$ such that $H^2(X',\Z)_{\mathrm{tr}}\simeq T'$ we get an embedding
\[
E_8(-2)\simeq (U^3\oplus E_8(-2))^{\perp} \hookrightarrow (T')^{\perp}\simeq \mathrm{NS}(X').
\]
This embedding is primitive, since $E_8(-2)$ is obtained as an orthogonal complement. Therefore, $X'$ admits a Nikulin involution by \cite[Prop.\ 2.2, 2.3]{van2007nikulin}.
Note that $T(X)$ and $T'_\Q$ are isometric quadratic spaces. Hence, by the surjectivity of the period map we can find a K3 surface $X'$ with $H^2(X',\Z)_{\mathrm{tr}}\simeq T'$ such that $T(X')$ is Hodge isometric to $T(X)$. This concludes the proof since the K3 surface $X'$ is Hodge isometric to $X$ and admits a Nikulin involution as required.
\end{proof}
\end{prop}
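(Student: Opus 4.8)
The plan is to prove the two implications separately, the essential input being the lattice-theoretic characterization of Nikulin involutions recalled just above the statement: by van Geemen--Sarti a K3 surface admits a Nikulin involution precisely when $E_8(-2)$ embeds primitively into its N\'eron--Severi group, and, up to an automorphism of the K3 lattice, this primitive embedding is unique with orthogonal complement isometric to $U^3\oplus E_8(-2)$. I would use these facts to convert the geometric condition into the purely lattice-theoretic statement about $T(X)$, always keeping track of whether identities are needed integrally or only over $\Q$.

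For the ``only if'' direction, suppose $X$ is Hodge isometric to a K3 surface $X'$ carrying a Nikulin involution. Then $E_8(-2)$ embeds primitively in $\mathrm{NS}(X')$, so passing to orthogonal complements inside the K3 lattice $H^2(X',\Z)$ gives $T(X')=\mathrm{NS}(X')^{\perp}\subseteq E_8(-2)^{\perp}\simeq U^3\oplus E_8(-2)$. Since a Hodge isometry is in particular a rational isometry, $T(X)\simeq T(X')$ over $\Q$, and tensoring the inclusion above with $\Q$ yields the desired isometric embedding $T(X)\hookrightarrow U^3_\Q\oplus E_8(-2)_\Q$. This direction is essentially formal.

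The ``if'' direction is where the real work lies, and I would argue it by explicitly constructing the sought surface. Starting from an isometric embedding $T(X)\hookrightarrow U^3_\Q\oplus E_8(-2)_\Q$, I would first clear denominators: multiplying by a suitable $\lambda\in\Z_{>0}$, the integral transcendental lattice $H^2(X,\Z)_{\mathrm{tr}}$ maps into $U^3\oplus E_8(-2)$, producing an integral embedding $j$. Here the rescaling multiplies the form by the square $\lambda^2$, which is harmless, since scaling a rational quadratic form by a perfect square yields an isometric form. Next I would fix a primitive embedding $\iota\colon U^3\oplus E_8(-2)\hookrightarrow H^2(X,\Z)$ whose orthogonal complement is isometric to $E_8(-2)$, taking $\iota$ to be the inclusion of the complement of the unique primitive copy of $E_8(-2)$; such an $\iota$ exists by the uniqueness and the complement computation recalled above. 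Letting $T'$ be the saturation of $(\iota\circ j)(H^2(X,\Z)_{\mathrm{tr}})$ inside the K3 lattice, one checks that $T'_\Q$ is isometric to $T(X)$ as a rational quadratic space.

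The decisive step, and the main obstacle, is then to realize $T'$ as the transcendental lattice of an honest projective K3 surface $X'$ whose transcendental Hodge structure is isometric to that of $X$: this I would obtain from the surjectivity of the period map, transporting the K3-type Hodge structure on $T(X)$ to $T'$ via the isometry $T'_\Q\simeq T(X)$. Once $X'$ is produced, its N\'eron--Severi lattice $(T')^{\perp}$ contains $\iota(U^3\oplus E_8(-2))^{\perp}\simeq E_8(-2)$, and this embedding is primitive because it arises as an orthogonal complement; hence $X'$ admits a Nikulin involution and is Hodge isometric to $X$, completing the proof. The subtle points I would check carefully are that the saturation does not disturb the rational isometry type and that the period-map construction keeps $E_8(-2)$ orthogonal to the transcendental part, so that it genuinely lands primitively in $\mathrm{NS}(X')$.
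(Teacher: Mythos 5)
Your argument is correct and follows essentially the same route as the paper's proof: the ``only if'' direction via passing to orthogonal complements of the primitive $E_8(-2)\subseteq\mathrm{NS}(X')$, and the ``if'' direction by clearing denominators, re-embedding via a primitive $\iota\colon U^3\oplus E_8(-2)\hookrightarrow H^2(X,\Z)$ with complement $E_8(-2)$, saturating, and invoking the surjectivity of the period map. The minor elaborations you add (the rescaling only changes the form by a square; the primitivity of the complement) are consistent with what the paper does implicitly.
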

In particular, we deduce that the transcendental lattice of  a K3 surfaces which is Hodge isometric to a K3 surface with a Nikulin involution is at most $13$-dimensional. Proposition \ref{prop. dimension of locus admitting Hodge similarities} then shows that the families of K3 surfaces satisfying the hypotheses of Theorem \ref{thm.: algebraicity of sqrt p} in the case $p=2$ are at most four-dimensional. To prove the existence of such a four-dimensional family of K3 surfaces we consider a particular quadratic subspace of $U^3_\Q\oplus E_8(-2)_\Q$, and we show that it admits a similarity of multiplier $2$.
\begin{prop}
\label{prop. 4-dim family of K3 admitting a Nik inv}
The locus of Hodge structures of K3-type on $\Lambda\coloneqq U^2_\Q\oplus E_8(-2)_\Q$ which admit a Hodge similarity of multiplier $2$ which is fixed by the Rosati involution is non-empty and has a four-dimensional component.
\begin{proof}
As $\dim \Lambda=12$, Proposition \ref{prop. dimension of locus admitting Hodge similarities} shows that the locus of Hodge structures of K3-type on $\Lambda$ which admit a Hodge similarity of multiplier $2$ which is fixed by the Rosati involution has a four-dimensional component if non-empty. By Remark \ref{rmk.: conditions on psi to be of Hodge type}, we just need to produce a similarity $\psi$ of $\Lambda$ of multiplier $2$ fixed by the Rosati involution such that $\Lambda_{\sqrt{2}}$ has signature $(2,4)$.

\smallskip
As the quadratic space $E_8(-2)_\Q$ is isometric to $\langle -2\rangle^8$, we can write
$\Lambda=Q_1\oplus Q_2\oplus Q_3\oplus Q_4\oplus Q_5$,
with
\[
Q_1=Q_2\coloneqq \langle 1\rangle\oplus \langle -1\rangle,\quad
Q_3=Q_4=Q_5=Q_6\coloneqq \langle -2\rangle\oplus\langle -2\rangle.\]
As in \cite[Exmp.\ 3.4]{van2008real}, we restrict to finding a similarity $\psi$ which preserves the decomposition of $\Lambda$ as above. I.e., we look for matrices $M_i\in\mathrm{GL}_2(\Q)$ which satisfy the following: $^t{M}_i Q_i=Q_i M_i$ and $M_i^2=2 \mathrm{Id}$ for $i=1,\ldots, 6$. Then, $\psi\coloneqq M_1\oplus\ldots\oplus M_6$ will be fixed by the Rosati involution by the first condition and will be a similarity of multiplier $2$.
A direct computation shows that the following matrices satisfy all the above conditions
\[
M_1=M_2=
\begin{pmatrix}
\frac{3}{2} & -\frac{1}{2}\\
\frac{1}{2} & -\frac{3}{2}
\end{pmatrix}, \quad
M_3=M_4=M_5=M_6=
\begin{pmatrix}
1 & 1\\
1 & -1
\end{pmatrix},
\]
and that the signature of $\Lambda_{\sqrt{2}}$ is $(2,4)$. Thus, $\psi$ satisfies the required properties.
\end{proof}
\end{prop}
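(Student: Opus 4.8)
The plan is to reduce the entire statement to the production of a single explicit similarity. Since $\dim\Lambda = 12$ is even, Proposition~\ref{prop. dimension of locus admitting Hodge similarities} already guarantees that the locus in question has a four-dimensional component as soon as it is non-empty, so the whole content is non-emptiness. By Remark~\ref{rmk.: conditions on psi to be of Hodge type}, non-emptiness is in turn equivalent to exhibiting one similarity $\psi$ of $(\Lambda,q)$ of multiplier $2$, fixed by the Rosati involution, one of whose two real eigenspaces $\Lambda_{\pm\sqrt 2}$ is negative definite. Thus the problem becomes purely linear-algebraic over $\Q$: build such a $\psi$ and read off the signatures of its eigenspaces.

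To build $\psi$ I would first pass to a convenient orthogonal decomposition into rank-two pieces. Over $\Q$ the lattice $E_8(-2)$ becomes isometric to $\langle -2\rangle^{8}$ (equivalently $E_8\otimes\Q\cong\langle 1\rangle^{8}$), so $\Lambda$ splits as an orthogonal sum $\Lambda\cong Q_1\oplus\cdots\oplus Q_6$, where $Q_1,Q_2$ are hyperbolic planes of signature $(1,1)$ coming from the two copies of $U$ and $Q_3,\dots,Q_6$ are the negative-definite planes $\langle -2\rangle^{\oplus 2}$. I would then search for $\psi$ respecting this decomposition, $\psi=M_1\oplus\cdots\oplus M_6$ with $M_i\in\mathrm{GL}_2(\Q)$, imposing on each block the two conditions $M_i^2=2\,\mathrm{Id}$ and self-adjointness ${}^tM_iQ_i=Q_iM_i$ for the Gram matrix $Q_i$. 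These are exactly what is needed: self-adjointness forces $\psi$ to be fixed by the Rosati involution, while combining it with $M_i^2=2\,\mathrm{Id}$ gives $q(\psi v,\psi w)=q(v,M^2w)=2\,q(v,w)$ on each block, so $\psi$ is a similarity of multiplier $2$.

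The delicate point, and the place where the actual choice is made, is the signature bookkeeping for the eigenspaces. Each $M_i$ has eigenvalues $\pm\sqrt2$ with one-dimensional real eigenlines, and self-adjointness guarantees (exactly as in the proof of Proposition~\ref{prop. dimension of locus admitting Hodge similarities}) that these eigenlines are $q$-orthogonal, so signatures add up blockwise. On the negative-definite blocks $Q_3,\dots,Q_6$ both eigenlines are automatically negative, contributing one negative line each to $\Lambda_{\sqrt2}$ and to $\Lambda_{-\sqrt2}$. The subtle requirement is on the hyperbolic planes $Q_1,Q_2$: there I must pick $M_i$ so that its $+\sqrt2$-eigenline is \emph{positive} and its $-\sqrt2$-eigenline negative, rather than the reverse. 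Granting this, $\Lambda_{\sqrt2}$ receives a positive line from each of $Q_1,Q_2$ and a negative line from each of $Q_3,\dots,Q_6$, yielding signature $(2,4)$, while $\Lambda_{-\sqrt2}$ is $(0,6)$, hence negative definite, as demanded by Remark~\ref{rmk.: conditions on psi to be of Hodge type}.

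Concretely I would exhibit explicit rational matrices, for instance $M=\left(\begin{smallmatrix}3/2 & -1/2\\ 1/2 & -3/2\end{smallmatrix}\right)$ on $\langle 1\rangle\oplus\langle -1\rangle$ and $M=\left(\begin{smallmatrix}1 & 1\\ 1 & -1\end{smallmatrix}\right)$ on $\langle -2\rangle\oplus\langle -2\rangle$; both square to $2\,\mathrm{Id}$ and are self-adjoint for their Gram matrices. The one genuine computation is then to evaluate $q$ on the $+\sqrt2$-eigenvector of each hyperbolic block and confirm it is positive, i.e. that $\Lambda_{\sqrt2}$ has signature exactly $(2,4)$. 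I expect this signature check on the hyperbolic planes to be the main obstacle, elementary but essential: it is the only step that is not automatic and the one that actually secures applicability of Remark~\ref{rmk.: conditions on psi to be of Hodge type}, everything else being formal.
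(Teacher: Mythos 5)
Your proposal is correct and follows essentially the same route as the paper: reduce to non-emptiness via Proposition \ref{prop. dimension of locus admitting Hodge similarities} and Remark \ref{rmk.: conditions on psi to be of Hodge type}, split $\Lambda$ into two hyperbolic planes and four copies of $\langle -2\rangle^{\oplus 2}$, and exhibit the very same block matrices, with the signature check on the hyperbolic blocks confirming $\Lambda_{\sqrt 2}$ has signature $(2,4)$. Your explicit emphasis on the blockwise signature bookkeeping (and the correct indexing $Q_1\oplus\cdots\oplus Q_6$) is, if anything, slightly more careful than the paper's write-up.
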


\begin{exmp}
\label{exmp: elliptic K3 with Nik inv}
By \cite[Sec.1\ 4]{van2007nikulin}, the family of elliptic K3 surfaces with a section and a two-torsion section provides an example of a ten-dimensional family of K3 surfaces with a Nikulin involution and general transcendental lattice $\Lambda=U^2_\Q\oplus \langle -2\rangle^8$. By Proposition \ref{prop. 4-dim family of K3 admitting a Nik inv}, there exists a four-dimensional family of elliptic K3 surfaces with a two-torsion section with endomorphism field containing $\Q(\sqrt{2})$.
\end{exmp}
\begin{rmk}
One can produce other examples of quadratic subspace of $U^3_\Q\oplus E_8(-2)^2_\Q$ which admit a similarity of multiplier $2$ which is fixed by the Rosati involution. For example, if $d>1$ is a square-free integer such that $2$ is a quadratic residue modulo $d$, the space $U^2_\Q\oplus \langle -2\rangle^7\oplus\langle -2d\rangle$ admits a similarity of multiplier $2$ and is not isometric to $U^2_\Q\oplus E_8(-2)_\Q$. This provides other four-dimensional families of K3 surfaces satisfying the hypotheses of Theorem \ref{thm.: algebraicity of sqrt p} for $p=2$.
\end{rmk}

To sum up, our discussion shows that Theorem \ref{thm.: algebraicity of sqrt p} in the case of Nikulin involutions gives the following:
\begin{thm}
\label{thm. Main thm for Nik. inv}
For every K3 surface in the four-dimensional families of K3 surfaces
with endomorphism field containing $\Q(\sqrt{2})$ which are Hodge isometric to a K3 surface with a Nikulin involution,
the endomorphism $\sqrt{2}$ is algebraic. 
In particular, the Hodge conjecture holds for the square of the general such K3 surface.
\end{thm}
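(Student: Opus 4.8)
The plan is to assemble the ingredients already established. First I would combine Proposition \ref{prop. 4-dim family of K3 admitting a Nik inv} with Proposition \ref{prop.: criterion for Nikulin involution} to produce genuine four-dimensional families realizing the hypotheses of Theorem \ref{thm.: algebraicity of sqrt p} for $p=2$. Concretely, Proposition \ref{prop. 4-dim family of K3 admitting a Nik inv} furnishes a four-dimensional component of the locus of K3-type Hodge structures on $\Lambda=U^2_\Q\oplus E_8(-2)_\Q$ carrying a Hodge similarity $\sqrt{2}$ fixed by the Rosati involution. Since $\Lambda\subseteq U^3_\Q\oplus E_8(-2)_\Q$, Proposition \ref{prop.: criterion for Nikulin involution} guarantees that any K3 surface $X$ whose transcendental lattice realizes such a Hodge structure is Hodge isometric to a K3 surface admitting a Nikulin involution. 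By surjectivity of the period map such $X$ exist and sweep out the desired four-dimensional family, each member satisfying $\Q(\sqrt{2})\subseteq\mathrm{End}_{\mathrm{Hdg}}(T(X))$ by construction.

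For the algebraicity of $\sqrt{2}$, I would then invoke Theorem \ref{thm.: algebraicity of sqrt p} verbatim: every $X$ in the family is Hodge isometric to a K3 surface with a symplectic involution and has $\Q(\sqrt{2})$ inside its endomorphism field, so the Hodge similarity $\sqrt{2}\colon T(X)\rightarrow T(X)$ is algebraic. This yields the first assertion. For the Hodge conjecture for $X^2$, I would pass to the general member and argue that its endomorphism field is exactly $\Q(\sqrt{2})$. As recalled at the start of this section, the Hodge conjecture for $X^2$ is equivalent to the algebraicity of every element of $\mathrm{End}_{\mathrm{Hdg}}(T(X))=\Q\oplus\Q\cdot\sqrt{2}$; the identity corresponds to the diagonal class and is algebraic, while $\sqrt{2}$ is algebraic by the first part, so the whole endomorphism field is algebraic.

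The main obstacle is the dimension bookkeeping behind the claim that the general member has endomorphism field precisely $\Q(\sqrt{2})$ rather than something larger. To control this I would reuse the eigenspace analysis of Proposition \ref{prop. dimension of locus admitting Hodge similarities}: an endomorphism field strictly containing $\Q(\sqrt{2})$ forces the period point $\omega$ to lie in a proper eigen-subspace of $\p((\Lambda_{\sqrt{2}})_\C)$, hence in a locus of strictly smaller dimension. For any fixed candidate overfield $F\supsetneq\Q(\sqrt{2})$ the corresponding locus has dimension $(\dim_\Q\Lambda)/[F:\Q]-2<4$, so the jumping locus is a countable union of proper subvarieties of the four-dimensional family. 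On the complement the endomorphism field equals $\Q(\sqrt{2})$, and the Hodge conjecture for the square of such a general member follows. The one point worth spelling out carefully is this codimension count, to ensure no component of full dimension $4$ carries a larger endomorphism field.
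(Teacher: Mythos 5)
Your proposal is correct and follows essentially the same route as the paper, which presents this theorem as a direct summary of Propositions \ref{prop.: criterion for Nikulin involution}, \ref{prop. dimension of locus admitting Hodge similarities}, and \ref{prop. 4-dim family of K3 admitting a Nik inv} combined with Theorem \ref{thm.: algebraicity of sqrt p}. Your codimension argument for the general member matches the paper's Remark \ref{rmk: obstructions on the endomorph. field}, which carries out the same dimension count and additionally notes that the only genuinely obstructed case is a totally real endomorphism field of degree four (a one-dimensional sublocus), the CM case being handled anyway by Hodge isometries.
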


\begin{rmk}
\label{rmk: obstructions on the endomorph. field}
The only case where Theorem \ref{thm. Main thm for Nik. inv} is not enough to prove the Hodge conjecture for the square of the K3 surfaces $X$ as in the statement is when the endomorphism field $E$ of $X$ is totally real of degree four and $T(X)$ is twelve-dimensional: 
this follows from the well known fact that, if the endomorphism field $E$ is totally real, the dimension of $T(X)$ as $E$-vector space is at least three. Recall that if $E$ is a CM field, then the Hodge conjecture for $X^2$ follows from \cite{buskin2019every} and \cite{huybrechts2019motives} using the fact that $E$ is generated by Hodge isometries.
Similarly to Proposition \ref{prop. dimension of locus admitting Hodge similarities}, one sees that the families of K3 surfaces as in Theorem \ref{thm. Main thm for Nik. inv} with totally real endomorphism field of degree four are one-dimensional.
\end{rmk}

Let us come to the case $p=3$. Let $X$ be a K3 surface with a symplectic automorphism of order $3$, and let $Y$ be the minimal resolution of the quotient. As above, we have an algebraic similarity $T(Y)\rightarrow T(X)$ of multiplier $3$ and $T(Y)$ is Hodge isometric to $T(X)(\frac{1}{3})$. By \cite[Thm.\ 4.1]{garbagnati2007symplectic}, a K3 surface $X$ admits a symplectic automorphism of order $3$ if and only if $K_{12}(-2)$ is primitively embedded in $\mathrm{NS}(X)$, where $K_{12}(-2)$ denotes the Coxeter--Todd lattice with the bilinear form multiplied by $-2$. With a similar proof as in Proposition \ref{prop.: criterion for Nikulin involution}, we can reformulate this in terms of the transcendental lattice as follows:
\begin{prop}
\label{prop.: criterion for symp. autom. of order 3}
A K3 surface $X$ is Hodge isometric to a K3 surface admitting a symplectic automorphism of order $3$ if and only if
$T(X)\subseteq U^3_\Q\oplus (A_2)_\Q^2$.\qed
\end{prop}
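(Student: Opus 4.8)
The plan is to copy, mutatis mutandis, the proof of Proposition~\ref{prop.: criterion for Nikulin involution}, replacing the lattice $E_8(-2)$ by the Coxeter--Todd lattice $K_{12}(-2)$ and the criterion of \cite[Prop.\ 2.2, 2.3]{van2007nikulin} by \cite[Thm.\ 4.1]{garbagnati2007symplectic}, which asserts that a K3 surface admits a symplectic automorphism of order $3$ if and only if $K_{12}(-2)$ embeds primitively into its N\'eron--Severi group. The single piece of lattice theory required is the analogue of the identity $(E_8(-2))^{\perp}\simeq U^3\oplus E_8(-2)$: if $M$ denotes the orthogonal complement of a primitively embedded copy of $K_{12}(-2)$ in the K3-lattice $H^2(X,\Z)$, then $M$ is \emph{rationally} isometric to $U^3_\Q\oplus(A_2)^2_\Q$. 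Since $K_{12}(-2)$ is negative definite of rank $12$, the lattice $M$ has rank $10$ and signature $(3,7)$, which agrees with that of $U^3_\Q\oplus(A_2)^2_\Q$ (the root lattice $A_2$ being negative definite in this ambient convention); the remaining rational invariants can be read off from the description of the invariant lattice of an order-$3$ symplectic automorphism in \cite{garbagnati2007symplectic}. By Witt's theorem over $\Q$ this rational complement does not depend on the chosen primitive embedding, so the statement is well posed.

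For the ``only if'' direction I would argue exactly as in Proposition~\ref{prop.: criterion for Nikulin involution}. If $T(X)$ is Hodge isometric to $T(X')$ for a K3 surface $X'$ with a symplectic automorphism of order $3$, then $K_{12}(-2)$ embeds primitively into $\mathrm{NS}(X')$ by \cite[Thm.\ 4.1]{garbagnati2007symplectic}, so that $T(X')=\mathrm{NS}(X')^{\perp}$ is contained in $M=K_{12}(-2)^{\perp}$; extending scalars gives $T(X)\simeq T(X')\hookrightarrow M_\Q\simeq U^3_\Q\oplus(A_2)^2_\Q$.

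For the ``if'' direction I would start from an embedding of quadratic spaces $T(X)\hookrightarrow U^3_\Q\oplus(A_2)^2_\Q\simeq M_\Q$, clear denominators to obtain a lattice embedding $j\colon\lambda H^2(X,\Z)_{\mathrm{tr}}\hookrightarrow M$ for a suitable positive integer $\lambda$, and compose it with the inclusion $\iota\colon M\hookrightarrow H^2(X,\Z)$ for which $\iota(M)^{\perp}\simeq K_{12}(-2)$. Letting $T'$ be the saturation of $(\iota\circ j)(H^2(X,\Z)_{\mathrm{tr}})$, one has $T'\subseteq\iota(M)$ by primitivity of $\iota(M)$, hence $K_{12}(-2)\simeq\iota(M)^{\perp}\hookrightarrow(T')^{\perp}$, and this embedding is primitive since it is an orthogonal complement. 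As $T(X)$ and $T'_\Q$ are isometric, the surjectivity of the period map yields a K3 surface $X'$ with $H^2(X',\Z)_{\mathrm{tr}}\simeq T'$ and $T(X')$ Hodge isometric to $T(X)$; then $K_{12}(-2)$ embeds primitively into $(T')^{\perp}\simeq\mathrm{NS}(X')$, so $X'$ carries a symplectic automorphism of order $3$ by \cite[Thm.\ 4.1]{garbagnati2007symplectic}.

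The steps of clearing denominators, saturating, and invoking the surjectivity of the period map are verbatim those of Proposition~\ref{prop.: criterion for Nikulin involution}, and I expect no difficulty there. The main obstacle, and the one genuine departure from the Nikulin case, is lattice-theoretic: unlike $(E_8(-2))^{\perp}\simeq U^3\oplus E_8(-2)$, which holds integrally, the complement $M$ of $K_{12}(-2)$ is only \emph{rationally} isometric to $U^3\oplus(A_2)^2$ (their discriminant groups have different orders, namely $2^{12}\cdot 3^6$ and $9$). One must therefore clear denominators into the genuine integral complement $M=K_{12}(-2)^{\perp}$ rather than into $U^3\oplus(A_2)^2$ itself, and separately identify the rational isometry class of $M$ with $U^3_\Q\oplus(A_2)^2_\Q$ using the invariant-lattice computations of \cite{garbagnati2007symplectic}.
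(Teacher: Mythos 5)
Your proposal is correct and is exactly the argument the paper intends: the paper gives no separate proof, stating only that the Nikulin-case argument of Proposition~\ref{prop.: criterion for Nikulin involution} carries over with $E_8(-2)$ replaced by $K_{12}(-2)$ and \cite[Prop.\ 2.2, 2.3]{van2007nikulin} replaced by \cite[Thm.\ 4.1]{garbagnati2007symplectic}. You have moreover correctly isolated the one genuine point of divergence, namely that the integral orthogonal complement of $K_{12}(-2)$ in the K3 lattice is only \emph{rationally} isometric to $U^3_\Q\oplus(A_2)^2_\Q$, so the denominator-clearing must land in the actual integral complement $M=K_{12}(-2)^{\perp}$.
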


Proposition \ref{prop. dimension of locus admitting Hodge similarities} shows that families of K3 surfaces $X$ with $T(X)\subseteq U^3_\Q\oplus (A_2)^2_\Q$ whose endomorphism field contains $\Q(\sqrt{3})$ are at most two-dimensional.
As in the case of Nikulin involutions, we consider a particular quadratic subspace of $U^3_\Q\oplus (A_2)^2_\Q$, and we show that it admits a similarity of multiplier $3$:
\begin{prop}
\label{prop. existence of similarity of multiplier 3}
The locus of Hodge structures of K3-type on  $\Gamma\coloneqq U^2_\Q\oplus (A_2)^2_\Q$ which admit a Hodge similarity of multiplier $3$ which is fixed by the Rosati involution is non-empty and has a two-dimensional component.
\begin{proof}
As in the proof of Proposition \ref{prop. 4-dim family of K3 admitting a Nik inv}, we will construct an explicit similarity $\psi$ of $\Gamma$ of multiplier $3$ fixed by the Rosati involution such that $\Gamma_{\sqrt{3}}$ has signature $(2,2)$. Then, by Proposition \ref{prop. dimension of locus admitting Hodge similarities} and Remark \ref{rmk.: conditions on psi to be of Hodge type}, the locus of Hodge structures on $\Gamma$ for which $\psi$ is a Hodge morphism has a two-dimensional component.

\smallskip
Diagonalizing the quadratic space
$
(A_2)_\Q,
$
we see that there is an isometry
\[
\Gamma\simeq Q_1\oplus Q_2\oplus Q_3\oplus Q_4,
\]
with $Q_1=Q_2=\langle 1\rangle\oplus \langle -1\rangle$ and $Q_3=Q_4=\langle -2\rangle\oplus \langle -3/2\rangle$. We provide now matrices $M_1, M_2, M_3,M_4\in \mathrm{GL}_2(\Q)$ such that $^t M_iQ_i=Q_iM_i$ and $M_i^2=3 \mathrm{Id}$. As one checks, setting
\[
M_1=M_2\coloneqq
\begin{pmatrix}
    2 & -1\\
    1 & -2
\end{pmatrix}\text{ and } M_3=M_4\coloneqq
\begin{pmatrix}
0 & \frac{3}{2}\\
2 & 0
\end{pmatrix},
\]
the map 
$
\psi\coloneqq M_1\oplus M_2\oplus M_3\oplus M_4
$
defines a similarity of multiplier $3$ of $\Gamma$ satisfying all the requirements.
\end{proof}
\end{prop}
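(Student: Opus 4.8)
The plan is to mirror the strategy of the proof of Proposition \ref{prop. 4-dim family of K3 admitting a Nik inv}. Since $\dim \Gamma = 8$, Proposition \ref{prop. dimension of locus admitting Hodge similarities} guarantees that the locus in question, if non-empty, has a component of dimension $8/2 - 2 = 2$. By Remark \ref{rmk.: conditions on psi to be of Hodge type}, it therefore suffices to exhibit a single similarity $\psi$ of $\Gamma$ of multiplier $3$ that is fixed by the Rosati involution and for which $\Gamma_{\sqrt{3}}$ has signature $(2,2)$ (equivalently, for which $\Gamma_{-\sqrt{3}}$ is negative definite). The construction will be block-diagonal with respect to a convenient orthogonal decomposition.

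First I would diagonalize the quadratic space. Over $\Q$ the hyperbolic plane satisfies $U_\Q \simeq \langle 1\rangle \oplus \langle -1\rangle$, and completing the square in the Gram matrix of $A_2$ yields $(A_2)_\Q \simeq \langle -2\rangle \oplus \langle -3/2\rangle$. This gives an orthogonal decomposition $\Gamma \simeq Q_1 \oplus Q_2 \oplus Q_3 \oplus Q_4$ with $Q_1 = Q_2 = \langle 1\rangle \oplus \langle -1\rangle$ and $Q_3 = Q_4 = \langle -2\rangle \oplus \langle -3/2\rangle$. I would then look for matrices $M_i \in \mathrm{GL}_2(\Q)$ satisfying the two relations ${}^t M_i\, Q_i = Q_i M_i$ and $M_i^2 = 3\,\mathrm{Id}$, and set $\psi \coloneqq M_1 \oplus M_2 \oplus M_3 \oplus M_4$. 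The first relation makes each block, hence $\psi$, fixed by the Rosati involution; combined with the second it gives ${}^t\psi\, Q\, \psi = Q\,\psi^2 = 3Q$, so $\psi$ is a similarity of multiplier $3$.

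Explicit choices that work are $M_1 = M_2 = \left(\begin{smallmatrix} 2 & -1 \\ 1 & -2 \end{smallmatrix}\right)$ on the hyperbolic blocks and $M_3 = M_4 = \left(\begin{smallmatrix} 0 & 3/2 \\ 2 & 0 \end{smallmatrix}\right)$ on the definite blocks; a direct matrix multiplication confirms both $M_i^2 = 3\,\mathrm{Id}$ and the self-adjointness ${}^t M_i\, Q_i = Q_i M_i$ in each case. The only genuinely delicate point is the signature computation, which I would carry out by finding the $\sqrt{3}$-eigenvector of each block and evaluating $q$ on it: on $Q_1$ and $Q_2$ the eigenvector $(1,\,2-\sqrt{3})$ gives $q = -6 + 4\sqrt{3} > 0$, contributing two positive directions, while on $Q_3$ and $Q_4$ the eigenvector $(\sqrt{3},\,2)$ gives $q = -12 < 0$, contributing two negative directions. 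Hence $\Gamma_{\sqrt{3}}$ has signature $(2,2)$ and $\Gamma_{-\sqrt{3}}$ is negative definite, so by Remark \ref{rmk.: conditions on psi to be of Hodge type} the locus is non-empty and of the claimed dimension.

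The main obstacle is precisely the interplay of constraints in the middle step: one must locate matrices simultaneously satisfying the two algebraic relations \emph{and} producing the correct eigenspace signature. Unlike the definiteness constraint in Proposition \ref{prop. 4-dim family of K3 admitting a Nik inv}, here one of the eigenspaces must be indefinite of type $(2,2)$, so the candidate matrices cannot be chosen to be identical across all blocks; the different behaviour on the hyperbolic versus the definite summands is what makes the signature come out right. Once suitable $M_i$ are in hand, every remaining verification is a routine finite computation.
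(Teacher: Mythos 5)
Your proposal is correct and follows essentially the same route as the paper: the identical diagonalization $\Gamma\simeq Q_1\oplus Q_2\oplus Q_3\oplus Q_4$ and the identical matrices $M_i$, with the dimension count and non-emptiness criterion supplied by Proposition \ref{prop. dimension of locus admitting Hodge similarities} and Remark \ref{rmk.: conditions on psi to be of Hodge type}. The only difference is that you carry out explicitly the eigenvector and signature computation that the paper dismisses with ``as one checks,'' and your computation is correct ($-6+4\sqrt{3}>0$ on the hyperbolic blocks, $-12<0$ on the definite blocks, giving signature $(2,2)$ for $\Gamma_{\sqrt{3}}$).
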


\begin{exmp}
By \cite[Prop.\ 4.2]{garbagnati2007symplectic}, the family of elliptic K3 surfaces with a section and a three-torsion section provides an example of a six-dimensional family of K3 surfaces with a symplectic automorphism of order $3$ and general transcendental lattice isometric to $\Gamma= U^2_\Q\oplus (A_2)^2_\Q$. By Proposition \ref{prop. existence of similarity of multiplier 3}, there is a two-dimensional subfamily of K3 surfaces with endomorphism field containing $\Q(\sqrt{3})$.
\end{exmp}
\begin{rmk}
As in the case of Nikulin involutions, one can construct other eight-dimensional quadratic subspaces of $U^3_\Q\oplus (A_2)^2_\Q$ admitting a similarity of multiplier $3$ as $\Gamma$ of Propostion \ref{prop. existence of similarity of multiplier 3}.
\end{rmk}
Our discussion shows that Theorem \ref{thm.: algebraicity of sqrt p} in the case $p=3$ gives the following:
\begin{thm}
\label{main thm for order three}
For every K3 surface in the two-dimensional families of K3 surfaces with endomorphism field containing $\Q(\sqrt{3})$ which are Hodge isometric to a K3 surface with a symplectic automorphism of order $3$, the endomorphism $\sqrt{3}$ is algebraic. In
particular, the Hodge conjecture holds for the square of every such K3 surface.
\end{thm}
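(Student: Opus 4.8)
The plan is to derive the statement by combining Theorem \ref{thm.: algebraicity of sqrt p} (specialized to $p=3$) with a dimension count, the new ingredient being that two-dimensionality of the family fixes $\dim_\Q T(X)$ and thereby rules out the obstruction that appears for Nikulin involutions in Remark \ref{rmk: obstructions on the endomorph. field}.

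First I would establish the algebraicity of $\sqrt{3}$. A K3 surface $X$ in one of these two-dimensional families satisfies $T(X)\subseteq U^3_\Q\oplus (A_2)^2_\Q$ and, by hypothesis, $\Q(\sqrt{3})\subseteq\mathrm{End}_{\mathrm{Hdg}}(T(X))$. By Proposition \ref{prop.: criterion for symp. autom. of order 3}, the first condition means that $X$ is Hodge isometric to a K3 surface carrying a symplectic automorphism of order $3$. Theorem \ref{thm.: algebraicity of sqrt p} with $p=3$ then applies verbatim and gives that the Hodge similarity $\sqrt{3}$ is algebraic.

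For the Hodge conjecture on $X^2$, I would use that, as recalled at the start of Section \ref{Sec. K3 with a symplectic autom.}, it is equivalent to the algebraicity of every element of $E\coloneqq\mathrm{End}_{\mathrm{Hdg}}(T(X))$. The crucial point is that, by Proposition \ref{prop. dimension of locus admitting Hodge similarities}, membership in a two-dimensional family forces $\dim_\Q T(X)=8$. If $E$ is CM, then $E$ is generated as a $\Q$-algebra by Hodge isometries, which are algebraic by \cite{buskin2019every} and \cite{huybrechts2019motives}; hence all of $E$ is algebraic, and this case does not even use the symplectic automorphism. If instead $E$ is totally real, then $T(X)$ is an $E$-vector space of dimension at least three (as in Remark \ref{rmk: obstructions on the endomorph. field}), so $[E:\Q]\le 8/3$, which combined with $\Q(\sqrt{3})\subseteq E$ forces $E=\Q(\sqrt{3})$. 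Then $E$ is spanned over $\Q$ by the identity and $\sqrt{3}$, both algebraic (the former trivially, the latter by the previous step), so every element of $E$ is again algebraic. In either case the Hodge conjecture for $X^2$ follows.

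I do not anticipate a genuine difficulty, since the argument only recombines results already established; the one point needing care is the dimension bookkeeping in the totally real case. It is precisely here that the case $p=3$ differs from $p=2$: for Nikulin involutions the maximal families have $\dim_\Q T(X)=12$, leaving room for a totally real endomorphism field of degree four that the method cannot treat, whereas the value $\dim_\Q T(X)=8$ occurring for $p=3$ excludes this possibility and lets us conclude for every member of the family, not merely the general one.
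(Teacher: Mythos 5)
Your argument matches the paper's: algebraicity of $\sqrt{3}$ via Proposition \ref{prop.: criterion for symp. autom. of order 3} and Theorem \ref{thm.: algebraicity of sqrt p}, then a bound on $\dim_\Q T(X)$ to force $E=\Q(\sqrt{3})$ or $E$ CM, exactly as in the remark following the theorem in the paper. One small imprecision: special members of a two-dimensional family can have $\dim_\Q T(X)<8$ (the transcendental lattice drops on countably many subloci), so you should only claim $\dim_\Q T(X)\le 8$ --- which is all your inequality $[E:\Q]\le 8/3$ actually needs.
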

\begin{rmk}
Note that in this case, Theorem \ref{main thm for order three} proves the Hodge conjecture for the square of every K3 surfaces of these families. The reason for this lies in the fact that the transcendental lattice of these K3 surfaces is at most eight-dimensional. Therefore, by a similar argument as in Remark \ref{rmk: obstructions on the endomorph. field}, we see that the endomorphism field of such K3 surface is either $\Q(\sqrt{3})$ or a CM field. In the latter case, the Hodge conjecture for the square of the K3 surface follows from the fact that CM fields are generated by Hodge isometries.
\end{rmk}

In the case of symplectic automorphisms of order bigger than $3$, the same procedure does not produce any K3 surface. In fact, the endomorphism of a K3 surface with a symplectic automorphism of order $5$ or $7$ is always $\Q$ or a CM field. This can be deduced from 
 \cite[Prop.\ 1.1]{garbagnati2007symplectic}: indeed, the transcendental lattice of a K3 surface admitting a symplectic automorphism of order $5$ is of dimension at most five, and for K3 surfaces with a symplectic automorphism of order $7$ its dimension is at most three. As in Remark \ref{rmk: obstructions on the endomorph. field}, one sees that, in both cases, the endomorphism field of these K3 surfaces cannot be a totally real field different from $\Q$.

\section{Kuga--Satake varieties and Hodge similarities}
\label{sec. KS varieties}
By a construction due to Kuga and Satake \cite{KS67}, given a polarized Hodge structure of K3-type $(V,q)$, there exists an abelian variety $\KS(V)$, called the Kuga--Satake variety of $(V,q)$, together with an embedding of Hodge structures
$\kappa\colon V\hookrightarrow H^1(\KS(V),\Q)^{\otimes 2}.$ We refer the reader for this construction to \cite[Ch.\ 4]{huybrechts2016lectures}, \cite{VG00}, and \cite{varesco2022hyper}.
In this section, we prove the functoriality of the Kuga--Satake construction with respect to Hodge similarities:
\begin{prop}
\label{prop. similarities and KS}
Let $\psi\colon (V,q)\rightarrow (V',q')$ be a Hodge similarity of polarized Hodge structures of K3-type. Then, there exists an isogeny of abelian varieties $\psi_{\KS}\colon \mathrm{KS}(V)\rightarrow\KS(V')$ making the following diagram commute
\[
\begin{tikzcd}
V \arrow[d, hook] \arrow[r, "\psi"] & V' \arrow[d, hook] \\
H^1(\KS(V),\Q)^{\otimes 2} \arrow[r, "({\psi_{\KS}})_*^{\otimes 2}"]      &  H^1(\KS(V'),\Q)^{\otimes 2}
\end{tikzcd},
\]
where the vertical arrows are the Kuga--Satake correspondence for $(V,q)$ and $(V',q')$.
\end{prop}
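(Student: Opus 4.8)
\emph{The plan is} to upgrade the even Clifford algebra to a functor on similarities and then run the usual Kuga--Satake functoriality argument, carefully tracking the multiplier $\la$. The starting point is the observation highlighted in the introduction that a similarity induces an isomorphism of even Clifford algebras. Concretely, I would define $\Psi\colon C^+(V,q)\to C^+(V',q')$ on products of vectors by $\Psi(v_1\cdots v_{2k})=\la^{-k}\psi(v_1)\cdots\psi(v_{2k})$ for $v_i\in V$. Since in $C^+(V',q')$ one has $\psi(v)\psi(w)+\psi(w)\psi(v)=2q'(\psi v,\psi w)=2\la\,q(v,w)$, the element $\la^{-1}\bigl(\psi(v)\psi(w)+\psi(w)\psi(v)\bigr)$ satisfies the Clifford relation of $(V,q)$; hence $\Psi$ is a well-defined algebra homomorphism, and it is an isomorphism because $\psi^{-1}$, a similarity of multiplier $\la^{-1}$, induces its inverse.

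Next I would check that $\Psi$ is an isomorphism of weight-one rational Hodge structures, where $C^+(V,q)=H_1(\KS(V),\Q)$ carries the Kuga--Satake complex structure. Recall that this complex structure is left multiplication by $J_V=c^{-1}e_1e_2$, where $e_1,e_2$ is an orthogonal basis of the real plane $(V^{2,0}\oplus V^{0,2})\cap V_\R$ normalised so that $q(e_1)=q(e_2)$, and $c>0$ is the constant forced by $J_V^2=-1$. As $\psi$ is a Hodge morphism, $\psi(e_1),\psi(e_2)$ is a corresponding orthogonal basis for $V'$ with $q'(\psi e_i)=\la q(e_i)$, so that the analogous constant for $V'$ is $\la c$ and $J_{V'}=(\la c)^{-1}\psi(e_1)\psi(e_2)$. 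A direct computation then gives $\Psi(J_V)=\la^{-1}c^{-1}\psi(e_1)\psi(e_2)=J_{V'}$: the factor $\la^{-1}$ coming from $\Psi$ is exactly cancelled by the factor $\la$ in the normalisation of $J_{V'}$. Since $\Psi$ is an algebra isomorphism intertwining left multiplication by $J_V$ and $J_{V'}$, it is $\C$-linear for the two complex structures, hence a Hodge isomorphism; an isomorphism of polarizable weight-one Hodge structures is induced by an isogeny, which I take to be $\psi_\KS\colon\KS(V)\to\KS(V')$.

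It remains to verify compatibility with the Kuga--Satake embedding $\kappa$. I would use the realisation $V\hookrightarrow\mathrm{End}(C^+(V,q))$, $v\mapsto\phi_v$, with $\phi_v(x)=q(v_0)^{-1}\,v\,x\,v_0$ for a fixed anisotropic $v_0\in V$ (this lands in $C^+$ since $v,v_0$ are odd). Writing $v_0'=\psi(v_0)$, so $q'(v_0')=\la q(v_0)$, and using the identity $\Psi(v\,x\,v_0)=\la^{-1}\psi(v)\,\Psi(x)\,v_0'$ for $x\in C^+(V,q)$, one computes $\Psi\circ\phi_v\circ\Psi^{-1}=\phi'_{\psi(v)}$, where again the two occurrences of $\la$ cancel against $q'(v_0')=\la q(v_0)$. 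Thus conjugation by $\Psi$ carries the copy of $V$ inside $\mathrm{End}(C^+(V,q))$ to the copy of $V'$ inside $\mathrm{End}(C^+(V',q'))$ compatibly with $\psi$. Transporting this identity through the polarizations, which identify $\mathrm{End}(H^1)\cong H^1\otimes H^1$ and turn conjugation by $(\psi_\KS)_*$ into $(\psi_\KS)_*^{\otimes 2}$, yields the commutativity of the square.

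The only genuinely delicate point, and the step I would treat most carefully, is this last bookkeeping of scalars: I must ensure that the multiplier $\la$ does not survive in the final diagram. This is precisely what the two cancellations above accomplish---once in matching the complex structures and once in conjugating the embedding---so that $\Psi$ is simultaneously a symplectic similitude of the Kuga--Satake Hodge structures, hence an honest isogeny compatible with the polarizations up to a positive scalar, and an intertwiner of the embeddings. Checking that the residual polarization scalar is absorbed into the identification $\mathrm{End}(H^1)\cong (H^1)^{\otimes 2}$, rather than appearing in the bottom horizontal map, is where the argument must be spelled out in full.
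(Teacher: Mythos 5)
Your proposal is correct and follows the paper's proof essentially step for step: the $\la^{-k}$-rescaled even Clifford isomorphism, the matching of the complex structures via a rescaled orthogonal basis of $(V^{2,0}\oplus V^{0,2})\cap V_\R$, and the conjugation identity intertwining the embeddings $V\hookrightarrow\mathrm{End}(\Cl^+(V))$ and $V'\hookrightarrow\mathrm{End}(\Cl^+(V'))$ are precisely Lemma \ref{lem. induced morph. on even Clifford algebras}, Lemma \ref{lem. psi is a morph of complex structures}, and the lemma following Lemma \ref{lem.: existence of isogeny of KS}. The one point you flag but leave open---that no residual polarization scalar survives in the bottom horizontal map---is settled in the paper by taking $Q'$ to be the polarization attached to the pair $(\psi f_1/\la,\psi f_2)$ and verifying by a trace computation (Lemma \ref{lem.: existence of isogeny of KS}) that $\psi_{\Cl}$ is then an exact isometry for $Q$ and $Q'$, so that the identifications $\mathrm{End}(\Cl^+)\simeq(\Cl^+)^{\otimes 2}$ carry conjugation by $\psi_{\Cl}$ to $\psi_{\Cl}^{\otimes 2}$ on the nose.
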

In the remainder of this section we prove Proposition \ref{prop. similarities and KS}.

\smallskip
Let $(V,q)$ and $(V',q')$ be polarized Hodge structures of K3-type, and let $\psi\colon (V,q)\rightarrow (V',q')$ be a Hodge similarity of multiplier $\la$. The next lemma shows that $\psi$ induces an isomorphism between the even Clifford algebras $\Cl^+(V)$ and $\Cl^+(V')$. Recall that $\Cl^+(V)$ is defined as the even degree part of $\bigotimes^*V/I_V$, where $I_V$ is the two-sided ideal generated by elements of the form $v\otimes v-q(v)$, for $v\in V$.
\begin{lem}
\label{lem. induced morph. on even Clifford algebras}
The isomorphism of graded rings 
\[
\textstyle
\psi_{\otimes}\colon \bigotimes^{\mathrm{ev}} V\rightarrow \bigotimes^{\mathrm{ev}} V', \quad v_1\otimes\cdots\otimes v_{2m}\mapsto (1/\la)^{m}\psi v_1\otimes\cdots\otimes \psi v_{2m}
\] 
induces an isomorphism $\psi_{\Cl}\colon \Cl^+(V)\xrightarrow{\simeq}\Cl^+(V')$.
\begin{proof}
From the definition, it is immediate to see that the map $\psi_\otimes$ is an isomorphism of graded rings. 
Given $v\in V$, we have the following 
\begin{align*}
    \psi_\otimes(v\otimes v-q(v))&=(1/\la) (\psi v\otimes \psi v)-q(v)= (1/\la) (\psi v\otimes \psi v-\la q(v))\\
    &=(1/\la)(\psi v\otimes \psi v-q'(\psi v)),
\end{align*}
where in the last step we used that $\psi$ is a similarity of multiplier $\la$.
This equality shows that $\psi_\otimes(v\otimes v-q(v))$ belongs to the ideal of $\bigotimes^{\mathrm{ev}} V'$ generated by $w\otimes w-q'(w)$ for $w\in V'$. Hence, the isomorphism $\psi_\otimes$ descends to an isomorphism $\psi_{\Cl}\colon \Cl^+(V)\rightarrow\Cl^+(V')$.
\end{proof}
\end{lem}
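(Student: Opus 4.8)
The plan is to treat the lemma as two independent points: first that $\psi_\otimes$ is a genuine isomorphism of graded rings, and second that it carries the Clifford ideal of $V$ into that of $V'$. The first point is essentially formal: $\psi$ is a linear isomorphism, so applying it factor-by-factor is bijective in each even degree, and the prefactors multiply correctly, $(1/\la)^m\cdot(1/\la)^{m'}=(1/\la)^{m+m'}$, so $\psi_\otimes$ respects the multiplication $\bigotimes^{2m}V\times\bigotimes^{2m'}V\to\bigotimes^{2(m+m')}V$. Thus the whole content of the lemma is that $\psi_\otimes$ descends to the quotients defining the even Clifford algebras.

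Writing $I_V\subseteq\bigotimes^* V$ for the two-sided ideal generated by the elements $r_v:=v\otimes v-q(v)$, one has $\Cl^+(V)=\bigotimes^{\mathrm{ev}}V/(I_V\cap\bigotimes^{\mathrm{ev}}V)$, and likewise for $V'$. The first thing I would record is that $I_V\cap\bigotimes^{\mathrm{ev}}V$ is spanned, as a $\Q$-vector space, by the sandwich elements $a\otimes r_v\otimes b$ whose total degree is even (equivalently $\deg a+\deg b$ even): indeed each such sandwich element is homogeneous of a single parity, so intersecting the span of all sandwich elements with the even part simply keeps the even-degree ones. It therefore suffices to evaluate $\psi_\otimes$ on these spanning elements and to check that the image lands in $I_{V'}$.

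The computation is the heart of the proof and amounts to the author's identity, kept track of through an arbitrary sandwich. On a homogeneous even tensor $\psi_\otimes$ applies $\psi$ to each factor and multiplies by $(1/\la)^{(\deg)/2}$; decomposing $a\otimes r_v\otimes b$ into its degree-$(\deg a+\deg b+2)$ and degree-$(\deg a+\deg b)$ parts and factoring out the common scalar $(1/\la)^{(\deg a+\deg b)/2}$ gives
\[
\psi_\otimes(a\otimes r_v\otimes b)=(1/\la)^{(\deg a+\deg b)/2}\,(\psi a)\otimes\bigl((1/\la)\,\psi v\otimes\psi v-q(v)\bigr)\otimes(\psi b),
\]
where $\psi a$ and $\psi b$ denote $\psi$ applied factorwise. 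Using the similarity relation $q'(\psi v)=\la q(v)$ the middle bracket becomes $(1/\la)\bigl(\psi v\otimes\psi v-q'(\psi v)\bigr)=(1/\la)\,r'_{\psi v}$ with $r'_w:=w\otimes w-q'(w)$, so the whole expression is a scalar multiple of $(\psi a)\otimes r'_{\psi v}\otimes(\psi b)\in I_{V'}$. Hence $\psi_\otimes(I_V\cap\bigotimes^{\mathrm{ev}}V)\subseteq I_{V'}\cap\bigotimes^{\mathrm{ev}}V'$ and $\psi_\otimes$ descends to a ring homomorphism $\psi_{\Cl}\colon\Cl^+(V)\to\Cl^+(V')$.

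Finally I would obtain bijectivity without extra work: $\psi^{-1}$ is a Hodge similarity of multiplier $1/\la$, so $(\psi^{-1})_\otimes=(\psi_\otimes)^{-1}$ descends by the same argument, and the two induced maps are mutually inverse. The one point I would flag as the genuine subtlety---and the reason the statement is restricted to the \emph{even} Clifford algebra---is that $\la$ need not be a square in $\Q$, so there is no rational rescaling of $\psi$ turning $v\mapsto\psi v$ into an isometry, and no way to extend $\psi_\otimes$ to the full tensor algebra; the factor $(1/\la)^m$ attached to degree $2m$ is exactly what makes the Clifford relation compatible, and the display above is what confirms that this factor survives being sandwiched between tensors of odd degree, where one might naively fear a stray $\sqrt{\la}$ appearing.
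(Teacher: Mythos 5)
Your proposal is correct and rests on the same key computation as the paper's proof: the identity $\psi_\otimes(v\otimes v-q(v))=(1/\la)(\psi v\otimes\psi v-q'(\psi v))$ obtained from the similarity relation, together with the observation that the prefactor $(1/\la)^m$ makes $\psi_\otimes$ multiplicative. The one thing you add is the explicit check on sandwich elements $a\otimes r_v\otimes b$ with $a,b$ of odd degree (which do lie in $I_V\cap\bigotimes^{\mathrm{ev}}V$ but not in the ideal of $\bigotimes^{\mathrm{ev}}V$ generated by the $r_v$ alone); the paper leaves this implicit, and your verification that the scalar $(1/\la)^{(\deg a+\deg b)/2}$ comes out right in that case is a worthwhile, if minor, tightening rather than a different argument.
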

In the construction of the Kuga--Satake variety associated to a polarized Hodge structure of K3-type, the complex Hodge structure on $\Cl^+(V)_\R$ is given by left multiplication by $J\coloneqq e_1\cdot e_2$, with $\{e_1,e_2\}$ an orthogonal basis of $V_\R\cap( V^{2,0}\oplus V^{0,2})$ satisfying $q(e_1)=q(e_2)=-1$. As one checks, this complex structure does not depend on the choice of the basis.
\begin{lem}
\label{lem. psi is a morph of complex structures}
The map $\psi_{\Cl,\R}\colon \Cl^+(V)_\R\rightarrow\Cl^+(V')_\R$ is compatible with the natural complex structures on $\Cl^+(V)_\R$ and $\Cl^+(V')_\R$.
\begin{proof}
Let $\{e_1,e_2\}$ be an orthogonal basis of $V_\R\cap (V^{2,0}\oplus V^{0,2})$ with $q(e_i)=-1$, and define 
\[e_i'\coloneqq \psi_\R e_i/\sqrt{\la}\in V'_\R\cap (V'^{2,0}\oplus V'^{0,2}), \quad \text{for } i=1,2.\]
As $\psi$ is a Hodge similarity of multiplier $\lambda_\psi$, one sees that $\{e'_1,e'_2\}$ is an orthogonal basis of $V'_\R\cap (V'^{2,0}\oplus V'^{0,2})$ such that $q'(e_i')=-1$.
The complex structure on $\Cl^+(V)_\R$ (resp., on $\Cl^+(V')_\R$) is then induced by left multiplication by $J\coloneqq e_1\cdot e_2$ (resp., by $J'\coloneqq e'_1\cdot e'_2$). Hence, the equality 
\[
\psi_{\Cl,\R}(J\cdot x)=J'\cdot \psi_{\Cl,\R} (x) \quad \forall x\in \Cl^+(V)
\]
proves that $\psi_{\Cl,\R}$ is a morphism of complex vector spaces.
\end{proof}
\end{lem}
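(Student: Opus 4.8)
The plan is to reduce the statement to the single identity $\psi_{\Cl,\R}(J)=J'$, after which compatibility follows formally from the fact, established in Lemma \ref{lem. induced morph. on even Clifford algebras}, that $\psi_{\Cl}$ is an isomorphism of algebras. Recall that the complex structures on $\Cl^+(V)_\R$ and $\Cl^+(V')_\R$ are given by left multiplication by $J=e_1\cdot e_2$ and by $J'=e_1'\cdot e_2'$ respectively. Since $\psi_{\Cl,\R}$ is in particular a ring homomorphism, for all $x\in\Cl^+(V)_\R$ we have
\[
\psi_{\Cl,\R}(J\cdot x)=\psi_{\Cl,\R}(J)\cdot \psi_{\Cl,\R}(x).
\]
Hence, once we know $\psi_{\Cl,\R}(J)=J'$, the right-hand side equals $J'\cdot\psi_{\Cl,\R}(x)$, which is exactly the asserted compatibility with the complex structures.

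First I would verify that $\{e_1',e_2'\}$ is a legitimate basis on the target, i.e.\ that it is an orthogonal basis of $V'_\R\cap(V'^{2,0}\oplus V'^{0,2})$ with $q'(e_i')=-1$; note that $\la>0$, so $\sqrt{\la}\in\R$ and $e_i'=\psi_\R e_i/\sqrt{\la}$ is well defined. The containment in $V'_\R\cap(V'^{2,0}\oplus V'^{0,2})$ is immediate from the fact that $\psi$ is a morphism of Hodge structures defined over $\Q$, so that $\psi_\R$ preserves both the real structure and the $(2,0)\oplus(0,2)$ part. The normalisation and orthogonality then follow from the similarity relation $q'(\psi v,\psi w)=\la\, q(v,w)$: indeed
\[
q'(e_i')=\tfrac{1}{\la}\,q'(\psi_\R e_i)=\tfrac{1}{\la}\cdot \la\, q(e_i)=q(e_i)=-1,
\]
and similarly $q'(e_1',e_2')=\tfrac{1}{\la}\cdot\la\, q(e_1,e_2)=0$. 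Because the complex structure on $\Cl^+(V')_\R$ is independent of the chosen orthonormal basis (as noted just before the statement), we are free to compute $J'$ using precisely this basis.

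The heart of the proof is then the computation of $\psi_{\Cl,\R}(J)$, in which the normalisation factor in the definition of $\psi_\otimes$ does all the work. Since $J$ is the image of $e_1\otimes e_2$ and $\psi_\R e_i=\sqrt{\la}\,e_i'$, the formula of Lemma \ref{lem. induced morph. on even Clifford algebras} gives
\[
\psi_\otimes(e_1\otimes e_2)=\tfrac{1}{\la}\,\psi e_1\otimes\psi e_2=\tfrac{1}{\la}\,(\sqrt{\la}\,e_1')\otimes(\sqrt{\la}\,e_2')=e_1'\otimes e_2',
\]
which descends to $e_1'\cdot e_2'=J'$ in $\Cl^+(V')$. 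The scalar factors $\tfrac{1}{\la}$ and $\sqrt{\la}\cdot\sqrt{\la}=\la$ cancel exactly, so that $\psi_{\Cl,\R}(J)=J'$ with no leftover scalar; this cancellation is precisely the reason for including the $(1/\la)^m$ twist in the definition of $\psi_\otimes$. Combined with the ring-homomorphism identity above, this proves the lemma.

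There is no serious obstacle here: the only thing that could go wrong is a mismatch of scalars, and the bookkeeping of the multiplier $\la$ against the normalising $\sqrt{\la}$ is exactly what the statement is designed to balance. The one conceptual point worth stressing is that the basis-independence of the Kuga--Satake complex structure is what licenses us to compute $J'$ using the specific basis $\{e_1',e_2'\}$ manufactured from $\{e_1,e_2\}$, rather than an arbitrary one.
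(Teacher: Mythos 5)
Your proof is correct and takes essentially the same route as the paper's: both construct $e_i'=\psi_\R e_i/\sqrt{\la}$, use the similarity relation and positivity of $\la$ to check that $\{e_1',e_2'\}$ is an orthogonal basis of $V'_\R\cap(V'^{2,0}\oplus V'^{0,2})$ with $q'(e_i')=-1$, and conclude via left multiplication by $J=e_1\cdot e_2$ and $J'=e_1'\cdot e_2'$. The only difference is that you spell out the step the paper leaves implicit, namely that $\psi_{\Cl,\R}$ is multiplicative and that the $(1/\la)$-twist in $\psi_\otimes$ cancels against $\sqrt{\la}\cdot\sqrt{\la}$ to give $\psi_{\Cl,\R}(J)=J'$, which is a welcome clarification rather than a deviation.
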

The Kuga--Satake variety of $(V,q)$ is defined as (the isogeny class) of the complex torus $\KS(V)\coloneqq \Cl^+(V)_\R/\Cl^+(V)$, where $\Cl^+(V)_\R$ is endowed with the complex structure we recalled above. Lemma \ref{lem. psi is a morph of complex structures} then shows that $\psi_{\Cl}\colon \Cl^+(V)\rightarrow\Cl^+(V')$ induces an isogeny of complex tori
\[
\psi_\KS\colon \KS(V)\rightarrow \KS(V').
\]
Recall that Kuga--Satake varieties of polarized Hodge structures of K3-type are abelian varieties: let $(f_1,f_2)\in V\times V$ be a pair of orthogonal elements of $V$ with positive square, and consider the pairing
\[
Q\colon \Cl^+(V)\times \Cl^+(V)\rightarrow \Q, \quad (v,w)\rightarrow \mathrm{tr}(f_1\cdot f_2\cdot v^*\cdot w),
\]
where $\mathrm{tr}(x)$ denotes the trace of the endomorphism of $\Cl^+(V)$ given by left multiplication by $x\in \Cl^+(V)$ and $v^*$ denotes the image of $v$ under the involution of $\Cl^+(V)$ induced by the involution $v_1\otimes\cdots \otimes v_{2m}\mapsto v_{2m}\otimes\cdots \otimes v_1$ on $\bigotimes^{\mathrm{ev}}V$. Then, $Q$ defines up to a sign a polarization for the weight-one Hodge structure $\Cl^+(V)$.
Note that the pair $(\psi f_1/\lambda_\psi ,\psi f_2 )\in V'\times V'$ satisfies the same hypotheses. Hence, it defines a polarization $Q'$ on $\Cl^+(V')$.
\begin{lem}
\label{lem.: existence of isogeny of KS}
The isomorphism $\psi_\Cl\colon\Cl^+(V)\rightarrow \Cl^+(V')$ is compatible with the polarizations $Q$ and $Q'$ defined above. Hence, $\psi_{\KS}\colon\KS(V)\rightarrow\KS(V')$ is an isogeny of abelian varieties.
\begin{proof}
To prove the lemma, we need to show that
\[
Q(v,w)=Q'(\psi_\Cl v,\psi_\Cl w),
\]
for all $v,w \in \Cl^+(V)$. By definition of $Q$ and $Q'$, this is equivalent to prove that
\[
\mathrm{tr}(f_1\cdot f_2\cdot v^* \cdot w) = \mathrm{tr}(\psi f_1/\lambda_\psi\cdot \psi f_2\cdot (\psi_\Cl v)^* \cdot \psi_\Cl w).
\]
Note that, by definition of $\psi_\Cl$, the following holds
\[
\psi f_1/\lambda_\psi\cdot \psi f_2\cdot (\psi_\Cl v)^* \cdot \psi_\Cl w=\psi_\Cl(f_1\cdot f_2\cdot v^* \cdot w).
\]
We then need to prove that, for every $x\in \Cl^+(V)$, the left multiplication by $x$ on $\Cl^+(V)$ has the same trace as the left multiplication by $\psi_\Cl x$ on $\Cl^+(V')$. This can be checked as follows: let $\{b_i\}_i$ be a basis of $\Cl^+(V)$ with dual basis $\{b^i\}_i$. By definition, we have that
\[
\textstyle\mathrm{tr}(x)=\Sigma_i b^i(x\cdot b_i)
\]
As a basis of $\Cl^+(V')$, consider $\{\psi_\Cl b_i\}_i$. Its dual basis is $\{ \psi_\Cl^{\vee}b^i\}_i$, where $\psi_\Cl^{\vee}$ is the dual action of $\psi_\Cl$. The trace of the left multiplication by $\psi_\Cl x$ is then
\[
\mathrm{tr}(\psi_\Cl x)=\textstyle \Sigma_i (\psi_\Cl^{\vee} b^i)(\psi_\Cl x\cdot \psi_\Cl b_i)=\Sigma_i b^i(x\cdot b_i)=\mathrm{tr}(x).
\]
This concludes the proof.
\end{proof}
\end{lem}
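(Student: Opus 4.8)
The plan is to reduce everything to the single compatibility identity $Q(v,w)=Q'(\psi_\Cl v,\psi_\Cl w)$ for all $v,w\in\Cl^+(V)$. Once this is established, the isogeny statement is formal: by Lemma \ref{lem. psi is a morph of complex structures} the $\Q$-linear isomorphism $\psi_\Cl$ is compatible with the complex structures, so the induced map $\psi_\KS$ of complex tori is an isogeny; compatibility with the polarizations $Q$ and $Q'$ then shows that both tori are abelian varieties and that $\psi_\KS$ respects the polarizations, so it is an isogeny of abelian varieties.

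To prove the compatibility, I would first unwind the definitions of $Q$ and $Q'$, reducing the identity to
\[
\mathrm{tr}\bigl(f_1\cdot f_2\cdot v^*\cdot w\bigr)=\mathrm{tr}\bigl((\psi f_1/\la)\cdot\psi f_2\cdot(\psi_\Cl v)^*\cdot\psi_\Cl w\bigr),
\]
where the left trace is computed on $\Cl^+(V)$ and the right on $\Cl^+(V')$. The structural inputs are that $\psi_\Cl$ is a ring isomorphism (Lemma \ref{lem. induced morph. on even Clifford algebras}) and that it intertwines the two involutions, $\psi_\Cl(x^*)=(\psi_\Cl x)^*$; the latter holds because the involution preserves tensor degree while reversing the order of the factors, so it commutes with the degree-graded scaling $(1/\la)^m$ built into $\psi_\otimes$. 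The decisive point is that the polarizing pair for $Q'$ was chosen precisely so that $\psi_\Cl$ carries the polarizing element of $Q$ to that of $Q'$: indeed $\psi_\Cl(f_1\cdot f_2)=(1/\la)\,\psi f_1\cdot\psi f_2=(\psi f_1/\la)\cdot\psi f_2$. Combining this with multiplicativity and the intertwining of $*$ yields the single algebra identity
\[
(\psi f_1/\la)\cdot\psi f_2\cdot(\psi_\Cl v)^*\cdot\psi_\Cl w=\psi_\Cl\bigl(f_1\cdot f_2\cdot v^*\cdot w\bigr).
\]

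With this identity the compatibility reduces to showing that, for every $x\in\Cl^+(V)$, left multiplication by $x$ on $\Cl^+(V)$ has the same trace as left multiplication by $\psi_\Cl x$ on $\Cl^+(V')$. This is the invariance of trace under conjugation by the linear isomorphism $\psi_\Cl$: since $\psi_\Cl(x\cdot y)=\psi_\Cl x\cdot\psi_\Cl y$, the two operators are conjugate, hence equitraced. Concretely, one transports a basis $\{b_i\}$ of $\Cl^+(V)$ with dual basis $\{b^i\}$ to the basis $\{\psi_\Cl b_i\}$ of $\Cl^+(V')$ and computes both traces as $\sum_i b^i(x\cdot b_i)$.

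The only genuinely delicate point is the rational bookkeeping of the multiplier $\la$. Because $\sqrt{\la}$ need not be rational, the polarizing pair for $Q'$ cannot be symmetrized and must be taken to be $(\psi f_1/\la,\psi f_2)$; one must check that the $(1/\la)^m$ factors threading through $\psi_\otimes$ make $\psi_\Cl$ commute with $*$ and give the degree-two element $f_1\cdot f_2$ exactly the single factor $1/\la$ needed to reproduce this pair. Everything else — multiplicativity, conjugation-invariance of the trace, and the passage from compatible polarized lattices to an isogeny of abelian varieties — is formal.
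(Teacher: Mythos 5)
Your proposal is correct and follows essentially the same route as the paper: reduce to the trace identity via the definitions of $Q$ and $Q'$, use that $\psi_\Cl$ is a ring isomorphism sending $f_1\cdot f_2$ to $(\psi f_1/\la)\cdot\psi f_2$ and intertwining the involutions, and conclude by the conjugation-invariance of the trace (which the paper spells out with the transported basis $\{\psi_\Cl b_i\}$ and its dual). You even make explicit a point the paper leaves implicit, namely that $\psi_\Cl$ commutes with the main involution $*$ because the $(1/\la)^m$ scaling in $\psi_\otimes$ depends only on tensor degree, which order reversal preserves.
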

The last ingredient for the proof of Proposition \ref{prop. similarities and KS} is the compatibility of the isomorphism $\psi_\Cl$ with the embedding $\varphi_V\colon  V\hookrightarrow \mathrm{End}(\Cl^+(V))$ given by the Kuga--Satake construction. Recall that $\varphi_V$ is given as follows: let $v_0\in V$ be an element with $q(v_0)\not= 0$, then $\varphi_V(v)\coloneqq f_v\in \mathrm{End}(\Cl^+(V))$
where $f_v(w)\coloneqq v\cdot w\cdot v_0$. Similarly, let $\varphi_{V'}\colon  V'\hookrightarrow \mathrm{End}(\Cl^+(V'))$ be the embedding corresponding to the element $\psi(v_0)/\lambda_\psi\in V'$.
\begin{lem}
With the previous notation, the following diagram commutes 
\[
\begin{tikzcd}
V \arrow[d, hook, "\varphi_V"] \arrow[r, "\psi"] & V' \arrow[d, hook, "\varphi_{V'}"] \\
\mathrm{End}(\Cl^+(V)) \arrow[r, "\mathrm{End}(\psi_{\Cl})"]      &  \mathrm{End}(\Cl^+(V'))
\end{tikzcd},
\]
where $\mathrm{End}(\psi_{\Cl})$ is the map $f\mapsto \psi_\Cl\circ f\circ \psi_\Cl^{-1}$.
\begin{proof}
By definition, the composition of $\varphi_V$ with $\mathrm{End}(\psi_{\Cl})$ is the map 
\[
V\rightarrow \mathrm{End}(\mathrm{Cl}^+(V')), \quad v\mapsto (w'\mapsto \psi(v)\cdot w'\cdot \psi(v_0)/\lambda_{\psi}).
\]
This shows that the above square is commutative. Indeed, $\varphi_{V'}$ is the map
\[
V'\hookrightarrow \mathrm{End}(\mathrm{Cl}^+(V')),\quad v'\mapsto (w'\mapsto v'\cdot w'\cdot \psi (v_0)/\lambda_\psi).\qedhere
\]
\end{proof}
\end{lem}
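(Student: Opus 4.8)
The plan is to verify commutativity of the square by unwinding the two composites $\mathrm{End}(\psi_\Cl)\circ\varphi_V$ and $\varphi_{V'}\circ\psi$ and checking that they agree as maps $V\to\mathrm{End}(\Cl^+(V'))$. Fixing $v\in V$ and evaluating on an arbitrary $w'\in\Cl^+(V')$, I would write
\[
\bigl(\mathrm{End}(\psi_\Cl)(\varphi_V(v))\bigr)(w')=\psi_\Cl\bigl(v\cdot\psi_\Cl^{-1}(w')\cdot v_0\bigr),
\]
using the definitions of $\mathrm{End}(\psi_\Cl)$ and of $\varphi_V$. Setting $w\coloneqq\psi_\Cl^{-1}(w')\in\Cl^+(V)$, the whole statement then reduces to the single identity, valid for all $v,v_0\in V$ and $w\in\Cl^+(V)$,
\begin{equation}
\psi_\Cl\bigl(v\cdot w\cdot v_0\bigr)=(1/\la)\,\psi(v)\cdot\psi_\Cl(w)\cdot\psi(v_0),
\tag{$\ast$}
\end{equation}
since, once $(\ast)$ is known, transporting the central scalar $1/\la$ onto the rightmost factor turns the right-hand side into $\psi(v)\cdot w'\cdot(\psi(v_0)/\la)$, which is exactly $\varphi_{V'}(\psi(v))(w')$ by the definition of $\varphi_{V'}$ attached to the element $\psi(v_0)/\la\in V'$.

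To prove $(\ast)$ I would test it on a spanning set. By linearity it suffices to take $w=v_1\cdots v_{2m}$, a Clifford product of an even number of vectors of $V$, which is the image of the tensor $v_1\otimes\cdots\otimes v_{2m}$ under the canonical projection $\bigotimes^{\mathrm{ev}}V\to\Cl^+(V)$. Then $v\cdot w\cdot v_0=v\cdot v_1\cdots v_{2m}\cdot v_0$ is a Clifford product of $2(m+1)$ vectors, hence an even element of $\Cl^+(V)$ represented by the tensor $v\otimes v_1\otimes\cdots\otimes v_{2m}\otimes v_0$. Applying the defining formula for $\psi_\otimes$ to this homogeneous tensor of degree $2(m+1)$, and using that $\psi_\Cl$ is the descent of $\psi_\otimes$ along the two projections (Lemma \ref{lem. induced morph. on even Clifford algebras}), I obtain
\[
\psi_\Cl\bigl(v\cdot w\cdot v_0\bigr)=(1/\la)^{m+1}\,\psi(v)\cdot\psi(v_1)\cdots\psi(v_{2m})\cdot\psi(v_0).
\]
Since the same formula gives $\psi_\Cl(w)=(1/\la)^m\,\psi(v_1)\cdots\psi(v_{2m})$ and scalars are central, pulling the factor $(1/\la)^m$ out of the middle block yields $(\ast)$.

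The only delicate point, and the nearest thing to an obstacle, is the bookkeeping of the powers of $\la$. The map $\psi_\otimes$ is defined only on the even tensor algebra, weighting a degree-$2k$ tensor by $(1/\la)^k$, whereas the sandwich $v\cdot w\cdot v_0$ is assembled from the odd generators $v,v_0\in V$, on which $\psi_\otimes$ has no a priori meaning. The resolution is that the sandwich is nonetheless an even element of $\Cl^+(V)$, so $\psi_\Cl$ does apply to it, and flanking $w$ by one vector on each side raises the tensor degree from $2m$ to $2(m+1)$, producing exactly one surplus factor $1/\la$ relative to $\psi_\Cl(w)$. That single extra $1/\la$ is precisely the scalar which, once absorbed into the rightmost factor, reproduces the twisted element $\psi(v_0)/\la$ defining $\varphi_{V'}$; everything else is a formal consequence of $\psi_\otimes$ being a homomorphism of graded rings. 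With $(\ast)$ in hand the square commutes, completing the proof.
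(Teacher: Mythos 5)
Your proposal is correct and takes essentially the same route as the paper: the paper's one-line ``by definition'' step silently uses exactly your identity $(\ast)$, namely $\psi_\Cl(v\cdot w\cdot v_0)=\psi(v)\cdot\psi_\Cl(w)\cdot\psi(v_0)/\lambda_\psi$, which you verify on spanning products $v_1\cdots v_{2m}$ via the grading of $\psi_\otimes$. Your only addition is to make explicit the bookkeeping of the single surplus factor $1/\lambda_\psi$ --- arising because flanking an even element by $v$ and $v_0$ raises the tensor degree by two --- which the paper absorbs without comment into the twisted element $\psi(v_0)/\lambda_\psi$ defining $\varphi_{V'}$.
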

\begin{proof}[Proof of Propostion \ref{prop. similarities and KS}]
Lemma \ref{lem.: existence of isogeny of KS} shows that there exists an isogeny of abelian varieties $\psi_\KS\colon \KS(V)\rightarrow \KS(V')$ such that $(\psi_\KS)_*=\psi_\Cl\colon \mathrm{Cl}^+(V)\rightarrow\mathrm{Cl}^+(V')$. 
Recall that the Kuga--Satake embedding is the composition
\[
V\xhookrightarrow{\varphi_V}\mathrm{End}(\Cl^+(V))\simeq \Cl^+(V)\otimes \Cl^+(V),
\]
where the isomorphism is given by the polarization $Q$ on $\Cl^+(V)$ which induces an isomorphism between $\Cl^+(V)$ and $\Cl^+(V)^*$.
Note that the commutativity of the square in the theorem follows from the commutativity of the square of Lemma \ref{lem.: commutative square} by the compatibility of $\psi_\Cl$ with the polarizations $Q$ and $Q'$.
\end{proof}
\subsection{De Rham--Betti similarities}
\label{subsection dRB similarities}
In \cite[Thm.\ 9.5]{vial2023dRBconj}, the authors prove that de Rham--Betti isometries between the second de Rham--Betti cohomology groups of two hyperk\"ahler manifolds defined over $\overline{\Q}$ are motivated in the sense of Andr\'e using the fact that the Kuga--Satake correspondence is motivated.
We note here that the observation that similarities between two quadratic spaces induce isomorphisms between the respective even Clifford algebras shows that the result in \cite{vial2023dRBconj} can be extended to de Rham--Betti similarities.
\smallskip

Let us briefly recall the notions of de Rham--Betti morphism and of motivated cycles as presented in \cite{vial2023dRBconj}. To simplify the exposition, we avoid going into too much detail of the Tannakian formalism.

\smallskip
\begin{defn}
Let $X$ be a smooth projective variety over $\overline{\Q}$, and let $\Q(k)\coloneqq (2\pi i)^k\Q\subseteq \C$. The \textit{de Rham--Betti cohomology groups} of $X$ are the triples
\[
H^n_{\dRB}(X,\Q(k))\coloneqq (H^n_{\mathrm{dR}}(X/\overline{\Q}), H^n_{\mathrm{B}}(X_\C,\Q(k)),c_X),
\]
where 
\[
c_X\colon H^n_{\mathrm{dR}}(X/\overline{\Q})\otimes_{\overline{\Q}} \C\xrightarrow{\simeq} H^n_{\mathrm{B}}(X_\C,\Q(k))\otimes_\Q\C
\] is the Grothendieck's period comparison isomorphism.
Given $X'$ another smooth projective variety over $\overline{\Q}$, a \textit{de Rham--Betti morphism} between $H_{\mathrm{dRB}}^n(X,\Q(k))$ and $H_{\mathrm{dRB}}^n(X',\Q(k))$ consists of a pair of morphisms
\[
f_{\mathrm{dR}}\colon H^n_{\mathrm{dR}},(X/\overline{\Q})\rightarrow H^n_{\mathrm{dR}}(X'/\overline{\Q}) \quad \text{and} \quad f_{B}\colon H^n_{\mathrm{B}}(X_\C,\Q(k))\rightarrow H^n_{\mathrm{B}}(X'_\C,\Q(k)),
\]
where $f_{\mathrm{dR}}$ is $\overline{\Q}$-linear and $f_{B}$ is $\Q$-linear, and their $\C$-linear extensions are compatible with $c_X$ and $c_{X'}$.
\end{defn}
\begin{defn}
Let $X$ be a smooth projective variety over $\overline{\Q}$. 
A \textit{motivated cycle} on $X$ is an element of $H^{2r}_{\mathrm{B}}(X_\C,\Q(r))$ of the form $p_{X,*}(\alpha\cup *_L \beta)$, where $\alpha$ and $\beta$ are algebraic cycles on $X\times_{\overline{\Q}} Y$ for some smooth projective variety $Y$ over $\overline{\Q}$, $*_L$ is the (inverse of the) Lefschetz isomorphism, and $p_{X,*}$ is the first projection.
Note that, given a motivated cycle $\alpha_{\mathrm{B}}$ in $H_{\mathrm{B}}^{2k}(X, \Q(k))$, there exists a de Rham cohomology class $\alpha_{\mathrm{dR}}$ in $H^{2k}_{\mathrm{dR}}(X/\overline{\Q})$ such that $c_X(\alpha_{\mathrm{dR}})=\alpha_{\mathrm{B}}$. In particular, we see that a motivated cycle on $X\times_{\overline{\Q}} Y$ induces a de Rham--Betti morphism between the de Rham--Betti cohomologies of $X$ and $Y$. 
One says that a de Rham--Betti morphism between cohomology groups of two smooth projective varieties $X$ and $Y$ over $\overline{\Q}$ is \textit{motivated} if it is induced by a motivated cycle on $X\times_{\overline{\Q}} Y$. For a complete introduction on this subject we refer the reader to \cite{andre2004motifs}. 
\end{defn}
Following \cite{vial2023dRBconj}, a variety over $\overline{\Q}$ is called a \textit{hyperk\"ahler manifold over} $\overline{\Q}$ if its base-change to $\C$ is a hyperk\"ahler manifold with second Betti number at least three. This last assumption is needed to ensure that the Kuga--Satake correspondence is motivated as proved by André \cite{andre1996shafarevich}. The following proposition extends the result of \cite[Thm.\ 9.5]{vial2023dRBconj}.
\begin{prop}
\label{prop.: dRB similarities}
Let $X$ and $X'$ be hyperk\"ahler manifolds over $\overline{\Q}$. Then, any de Rham--Betti similarity $H_{\mathrm{dRB}}^2(X,\Q)\xrightarrow{\simeq}H_{\mathrm{dRB}}^2(X',\Q)$ is motivated.
\begin{proof}
The proof of this theorem is exactly the same as the one in the reference with the only addition that similarities (and not just isometries) induce isomorphisms between the even Clifford algebras. We give here just a sketch of the proof.

\smallskip
Let $H_{\mathrm{dRB}}^2(X,\overline{\Q})$ be the second $\overline{\Q}$-de Rham--Betti cohomology group of $X$. That is
\[
H_{\mathrm{dRB}}^2(X,\overline{\Q})\coloneqq (H^2_{\mathrm{dR}}(X/\overline{\Q}),H^2_{\mathrm{B}}(X_\C,\overline{\Q}),c_X).
\]
Similarly define $H_{\mathrm{dRB}}^2(X',\overline{\Q})$.
By \cite[Lem.\ 6.2, 6.17]{vial2023dRBconj}, to prove the theorem, it suffices to prove the same statement over $\overline{\Q}$. I.e., that every $\overline{\Q}$-de Rham--Betti similarity between $
H_{\mathrm{dRB}}^2(X,\overline{\Q})$ and $ H_{\mathrm{dRB}}^2(X',\overline{\Q})$
is a $\overline{\Q}$-linear combination of motivated cycles on $X\times_{\overline{\Q}}X'$.

\smallskip
Let $T_{\dRB}^2(X,\overline{\Q})$ be the orthogonal complement of the subspace of $H_{\dRB}^2(X,\overline{\Q})$ spanned by divisor classes, and similarly define $T_{\dRB}^2(X',\overline{\Q})$.
As in the reference, one shows that, to prove the result, it suffices to show that every de Rham--Betti similarity between $ T_{\mathrm{dRB}}^2(X,\overline{\Q})$ and $T_{\mathrm{dRB}}^2(X',\overline{\Q})$ is $\overline{\Q}$-motivated.

\smallskip
Consider the $\overline{\Q}$-linear category $\mathscr{C}_{\overline{\Q}-\mathrm{dRB}}$ whose objects are triples $(M_{\mathrm{dR}},M_{\mathrm{B}},c_M),$ where $M_{\mathrm{dR}}$ and $M_{\mathrm{B}}$ are finite dimensional $\overline{\Q}$-vector spaces and \[c_M\colon M_{\mathrm{dR}}\otimes_{\overline{\Q}} \C\rightarrow M_{\mathrm{B}}\otimes_{\overline{\Q}} \C\] is a $\C$-linear isomorphism.  As in  \cite[Sec.\ 4.3]{vial2023dRBconj}, one sees that $\mathscr{C}_{\overline{\Q}-\mathrm{dRB}}$ is a neutral Tannakian category. Denote by $G_{\overline{\Q}-\mathrm{dRB}}$ its Tannakian fundamental group.
Note that $T_{\mathrm{dRB}}^2(X,\overline{\Q})$ and $T_{\mathrm{dRB}}^2(X',\overline{\Q})$ are objects in $\mathscr{C}_{\overline{\Q}-\mathrm{dRB}}$.

\smallskip
Denote by $V\coloneqq T_{\mathrm{B}}^2(X,\Q(1))$ and $V'\coloneqq T_{\mathrm{B}}^2(X',\Q(1))$ the transcendental Betti cohomologies of $X$ and $X'$.
Given a $\overline{\Q}$-de Rham--Betti similarity $\psi_{\dRB}\colon T_{\mathrm{dRB}}^2(X,\overline{\Q})\xrightarrow{\simeq}T_{\mathrm{dRB}}^2(X',\overline{\Q})$, it induces a similarity
\[
\psi\colon V \otimes \overline{\Q}\rightarrow  V' \otimes \overline{\Q}.
\]
As $\psi_\dRB$ is a morphism in $\mathscr{C}_{\overline{\Q}-\mathrm{dRB}}$, the morphism $\psi$ is $G_{\overline{\Q}-\mathrm{dRB}}$-invariant by the Tannakian formalism.
With the same definition as in Lemma \ref{lem. induced morph. on even Clifford algebras}, we see that $\psi$ induces a $G_{\overline{\Q}-\mathrm{dRB}}$-invariant isomorphism of algebras 
\[
\psi_{\Cl}\colon \mathrm{Cl}^+(V\otimes\overline{\Q})\rightarrow \mathrm{Cl}^+(V'\otimes\overline{\Q}).
\]
As in the reference, one then shows that this induces a  $G_{\overline{\Q}-\mathrm{dRB}}$-invariant isomorphism of algebras $J\colon \mathrm{End}(\Cl(V)\otimes\overline{\Q})\rightarrow \mathrm{End}(\Cl(V')\otimes\overline{\Q})$.
One then shows that $J$ is $\overline{\Q}$-motivated.
This in turn implies that $\psi$ is $\overline{\Q}$-motivated using the fact that the Kuga--Satake correspondence is $\overline{\Q}$-motivated as proven in \cite[Prop.\ 8.5]{vial2023dRBconj}. This concludes the proof.
\end{proof}
\end{prop}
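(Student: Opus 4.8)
The plan is to mimic the proof of \cite[Thm.\ 9.5]{vial2023dRBconj} for isometries, inserting at the single relevant step the observation that a similarity---not merely an isometry---induces an isomorphism of even Clifford algebras. First I would reduce the statement from $\Q$-coefficients to $\overline{\Q}$-coefficients: by \cite[Lem.\ 6.2, 6.17]{vial2023dRBconj} it suffices to show that every $\overline{\Q}$-de Rham--Betti similarity between $H^2_{\dRB}(X,\overline{\Q})$ and $H^2_{\dRB}(X',\overline{\Q})$ is a $\overline{\Q}$-linear combination of motivated cycles on $X\times_{\overline{\Q}}X'$. Since divisor classes are motivated, one further reduces to the transcendental parts $T^2_{\dRB}(X,\overline{\Q})$ and $T^2_{\dRB}(X',\overline{\Q})$, so that the problem becomes showing that a similarity of these transcendental de Rham--Betti realizations is $\overline{\Q}$-motivated.

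Next I would set up the neutral Tannakian category $\mathscr{C}_{\overline{\Q}-\dRB}$ of triples $(M_{\mathrm{dR}}, M_{\mathrm{B}}, c_M)$ as in \cite[Sec.\ 4.3]{vial2023dRBconj}, with fundamental group $G_{\overline{\Q}-\dRB}$. Writing $V$ and $V'$ for the transcendental Betti realizations, a given $\overline{\Q}$-de Rham--Betti similarity $\psi_{\dRB}$ produces a similarity $\psi\colon V\otimes\overline{\Q}\to V'\otimes\overline{\Q}$, and being a morphism in $\mathscr{C}_{\overline{\Q}-\dRB}$ it is $G_{\overline{\Q}-\dRB}$-invariant. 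Here is where the only genuinely new input enters: by the same computation as in Lemma \ref{lem. induced morph. on even Clifford algebras}, the similarity $\psi$ (of multiplier $\la$) induces an isomorphism of algebras $\psi_{\Cl}\colon\Cl^+(V\otimes\overline{\Q})\to\Cl^+(V'\otimes\overline{\Q})$, and this isomorphism is again $G_{\overline{\Q}-\dRB}$-invariant because $\psi$ is. Passing to endomorphism algebras then yields a $G_{\overline{\Q}-\dRB}$-invariant isomorphism $J\colon\mathrm{End}(\Cl(V)\otimes\overline{\Q})\to\mathrm{End}(\Cl(V')\otimes\overline{\Q})$.

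Finally I would argue, exactly as in the reference, that the Tannakian invariance of $J$ forces it to be $\overline{\Q}$-motivated, and then transport motivatedness back to $\psi$ through the Kuga--Satake correspondence, which is $\overline{\Q}$-motivated by \cite[Prop.\ 8.5]{vial2023dRBconj} (ultimately relying on André's theorem \cite{andre1996shafarevich} and the hypothesis that the second Betti number is at least three). The main obstacle is \emph{not} in any of the de Rham--Betti or Kuga--Satake machinery, which is imported wholesale from \cite{vial2023dRBconj}; rather, the only point requiring attention is checking that the Clifford-algebra step and the subsequent identifications survive verbatim when the multiplier $\la$ is allowed to differ from one. Since Lemma \ref{lem. induced morph. on even Clifford algebras} already establishes the required algebra isomorphism for arbitrary multiplier, this verification is routine, and the remainder of the argument goes through unchanged.
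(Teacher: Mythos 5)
Your proposal follows the paper's proof step for step: the same reduction to $\overline{\Q}$-coefficients via \cite[Lem.\ 6.2, 6.17]{vial2023dRBconj}, the same passage to the transcendental parts, the same Tannakian setup, and the same identification of the single new ingredient (the Clifford-algebra isomorphism induced by a similarity of arbitrary multiplier, as in Lemma \ref{lem. induced morph. on even Clifford algebras}) before concluding via the motivatedness of the Kuga--Satake correspondence. This is essentially identical to the argument given in the paper.
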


\section{Hodge similarities and algebraic classes}
\label{Sec. implication of functoriality of KS}
We now go back to the case of hyperk\"ahler manifolds defined over $\C$ and study the consequences of the functoriality of the Kuga--Satake construction relative to Hodge similarities in the case where the Hodge structure $(V,q)$ is geometrical. In other words, we assume that there is a hyperk\"ahler manifold $X$ for which $V=T(X)$ or $V=H^2(X,\Q)$ and $q$ is the Beauville--Bogomolov quadratic form with the sign changed.
\begin{rmk}
In Section \ref{sec. KS varieties}, we studied the Kuga--Satake construction for polarized Hodge structures of K3-type. The same construction also works for the second cohomology group of a hyperk\"ahler manifold $X$ even though it is not polarized by the Beauville--Bogomolov quadratic form. Indeed, using the direct sum decomposition $H^2(X,\Q)\simeq T(X)\oplus \mathrm{NS}(X)_\Q$, one sees that the even Clifford algebra of $H^2(X,\Q)$ is a power of the even Clifford algebra of $T(X)$. Thus, the Kuga--Satake variety $\mathrm{KS}(H^2(X,\Q))$ is an abelian variety isogenous to a power of $\mathrm{KS}(T(X))$.
\end{rmk}
\smallskip
Let $\KS(X)$ be the Kuga--Satake variety of $H^2(X,\Q)$. The Kuga--Satake correspondence gives an embedding of Hodge structures:
\[
\kappa_X\colon H^2(X,\Q)\hookrightarrow H^1(\KS(X),\Q)^{\otimes 2}\subseteq H^2(\KS(X)^2,\Q).
\]
The Hodge conjecture predicts that $\kappa_X$ is algebraic:
\begin{conj}
[Kuga--Satake Hodge conjecture]
\label{KS hodge conj}
Let $X$ be a hyperk\"ahler manifold, then, the Kuga--Satake correspondence $\kappa_X$ is algebraic.
\end{conj}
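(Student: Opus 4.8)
The goal is to prove that the Hodge class in $H^{2n,2n}(X\times\KS(X)^2,\Q)$ corresponding to $\kappa_X$ is algebraic, where $2n=\dim X$; equivalently, that $\kappa_X$ is induced by an algebraic cycle on $X\times\KS(X)^2$. The natural starting point is the theorem of Andr\'e \cite{andre1996shafarevich} that, under the standing assumption $b_2(X)\geq 3$, the Kuga--Satake correspondence $\kappa_X$ is a \emph{motivated} cycle. This already expresses $\kappa_X$ in terms of honest algebraic cycles together with finitely many applications of the Lefschetz $*$-operator, so the entire problem reduces to showing that the relevant $*$-operators are themselves algebraic.

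The second step is to upgrade ``motivated'' to ``algebraic''. A motivated cycle on a smooth projective variety $Z$ has the shape $p_{Z,*}(\alpha\cup *_L\beta)$ with $\alpha,\beta$ algebraic on a product $Z\times Y$, and under the Lefschetz standard conjecture the operator $*_L$ is induced by an algebraic correspondence; in that case every motivated cycle is algebraic. In Andr\'e's construction the auxiliary varieties may be taken to be abelian, where the Lefschetz standard conjecture is classically known (Lieberman), and the conjecture is stable under products. I would therefore argue that the Kuga--Satake Hodge conjecture for $X$ follows from the Lefschetz standard conjecture for $X$ in the relevant degrees, which for a hyperk\"ahler manifold is controlled by its degree-two part through the Looijenga--Lunts--Verbitsky structure. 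This yields a clean conditional proof.

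Since the Lefschetz standard conjecture is open for a general hyperk\"ahler manifold, an unconditional argument for a fixed $X$ instead requires a direct geometric grip on $\KS(X)$: one realizes the Kuga--Satake abelian variety inside the geometry of $X$ and exhibits $\kappa_X$ as the class of an explicit correspondence. This is the strategy of Paranjape and Morrison for distinguished families of K3 surfaces, and of Voisin \cite{voisin2022footnotes}, building on Markman \cite{markman2022monodromy} and O'Grady \cite{O'grady}, for the generalized Kummer deformation type, where $\KS(X)$ is controlled by the underlying abelian surface and the Lefschetz standard conjecture in degree two is available by Foster \cite{foster2023lefschetz}. The main obstacle is precisely the passage from motivated to algebraic, namely the algebraicity of the Lefschetz $*$-operator: this is unavailable for general hyperk\"ahler manifolds, which is why the statement must remain a conjecture, provable in full only for those $X$ for which either the Lefschetz standard conjecture in degree two or an explicit geometric model of $\KS(X)$ is at hand.
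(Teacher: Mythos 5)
This item is a \emph{conjecture} in the paper: the author states it, records in Remark \ref{rmk.: dependence of KS corr. on the coices of three vectors} that it is independent of the choices in the construction, and then only ever \emph{assumes} it (in Theorem \ref{main thm} and Corollary \ref{main cor}) or invokes the known cases --- \cite{voisin2022footnotes}, building on \cite{markman2022monodromy} and \cite{O'grady}, for generalized Kummer type, and \cite{floccari2022sixfolds} for certain $\mathrm{K3}^{[3]}$-type families. You correctly recognized that no unconditional proof exists, declined to fabricate one, and your account of the known cases and of the role of Foster's degree-two Lefschetz result \cite{foster2023lefschetz} matches the paper's own discussion in Section \ref{Sec. applications}. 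In that sense your proposal is as correct as the situation allows.

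One caveat on your conditional reduction from ``motivated'' to ``algebraic''. A motivated cycle on $Z=X\times\KS(X)^2$ has the form $p_{Z,*}(\alpha\cup *_L\beta)$ with $\alpha,\beta$ algebraic on $Z\times Y$, and the Lefschetz operator $*_L$ lives on the \emph{ambient product} $Z\times Y$; so to conclude algebraicity you need conjecture $B$ for $X\times\KS(X)^2\times Y$, hence (by stability under products and Lieberman's theorem for the abelian factors) for $X$ \emph{and} for every auxiliary $Y$ that actually occurs. Your assertion that in Andr\'e's construction \cite{andre1996shafarevich} the auxiliaries ``may be taken to be abelian'' is not justified as stated: his deformation argument, via the theorem of the fixed part along families of hyperk\"ahler manifolds, a priori brings in other hyperk\"ahler fibers as auxiliary varieties, so the honest conditional hypothesis is $B$ for the relevant hyperk\"ahler manifolds, not for $X$ alone. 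Likewise, the reduction of the full Lefschetz standard conjecture to its degree-two part via the Looijenga--Lunts--Verbitsky algebra is a genuine theorem requiring citation (it is the mechanism behind \cite{charles2013standard}-type results), not a formal consequence. Neither point undermines your conclusion --- the statement must remain conjectural --- but both would need to be nailed down before your second paragraph could be called a proof of even the conditional claim.
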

\begin{rmk}
\label{rmk.: dependence of KS corr. on the coices of three vectors}
Note that the Kuga--Satake correspondence depends on the choice of the three elements $v_0,f_1,f_2\in T(X)$ as in Section \ref{sec. KS varieties}. Choosing a different $\widetilde{v}_0\in T(X)$ changes the embedding by the automorphism of $\Cl^+(H^2(X,\Q))$ which sends $w$ to $\frac{ w\cdot v_0\cdot \widetilde{v}_0}{q(v_0)}$, and choosing a different pair $\widetilde{f}_1,\widetilde{f}_2\in T(X)$ corresponds to changing the polarization on the complex torus $\KS(X)$. However, neither of these two operations affects the algebraicity of $\kappa_X$. Hence, Conjecture \ref{KS hodge conj} does not depend on the choices made in the definition of $\kappa_X$.
\end{rmk}
Let $2n\coloneqq \dim X$ and $N\coloneqq\dim \KS(X)$. The transpose of $\kappa_X^{\vee}$ of $\kappa_X$ is the surjection 
\[
\kappa_X^{\vee}\colon  H^{4N-2}(\KS(X)^2,\Q)\twoheadrightarrow H^{4n-2}(X,\Q).
\]
Note that, as $\kappa_X$ and $\kappa_X^{\vee}$ are transpose of each other, $\kappa_X$ is algebraic if and only if $\kappa_X^{\vee}$ is algebraic. 
Let $h_X\in H^2(X,\Q)$ be the cohomology class of an ample divisor on $X$. By the strong Lefschetz theorem, the cup product with $h_X^{2n-1}$ induces an isomorphism of Hodge structures
\[
h_X^{2n-1}\cup\bullet\colon H^2(X,\Q)\rightarrow H^{4n-2}(X,\Q)(2n-2),
\]
where $(2n-2)$ denotes the Tate twist by $\Q(2n-2)$.
Let 
\[
\delta_X\coloneqq (h_X^{2n-1}\cup\bullet )^{-1}\colon H^{4n-2}(X,\Q)(2n-2)\rightarrow H^2(X,\Q)
\]
be the inverse map. As $h_X^{2n-1}\cup\bullet$ is an isomorphism of Hodge structures, also $\delta_X$ is an isomorphism of Hodge structures. Note that it is in general not known whether $\delta_X$ is algebraic or not. 
Adapting the proof of \cite[Lem.\ 3.4]{voisin2014bloch}, we show that $\kappa_X$ and $\kappa_X^{\vee}$ satisfy the following:
\begin{lem}
\label{lem.: commutative square}
Let $X$ be a hyperk\"ahler manifold of dimension $2n$, and let $h_{\KS}\in H^2(\KS(X),\Q)$ be the class of an ample divisor on $\KS(X)$. Denote by $\varphi$ the restriction to $T(X)$ of the composition 
\[
\delta_X\circ {\kappa_X^{\vee}}\circ (h_{\KS}^{2N-2}\cup\bullet )\circ \kappa_X\colon H^2(X,\Q)\rightarrow H^2(X,\Q).
\]
Then, $\varphi$ is a non-zero rational multiple of the identity $\mathrm{Id}_{T(X)}\colon T(X)\rightarrow T(X)$.
\begin{proof}
Let us begin by showing that $\varphi$ is a Hodge automorphism. Note that $\varphi$ is by construction a morphism of Hodge structures. It is then an element of the endomorphism field of $T(X)$. As $T(X)$ is an irreducible Hodge structure, every non-trivial endomorphism is an automorphism. In particular, we just need to show that $\varphi$ is non-zero. As $\delta_X\colon H^{4n-2}(X,\Q)(2n-2)\rightarrow H^2(X,\Q)$ is an isomorphism of Hodge structures, it suffices to show that the map 
\[
\left. \left(\kappa_X^{\vee} \circ (h_{\KS}^{2N-2}\cup\bullet)\circ \kappa_X\right)\right\rvert_{T(X)}\colon T(X)\rightarrow H^{4n-2}(X,\Q)
\]
is non-zero. Let $\omega\in H^{2,0}(X)$ be the class of a symplectic form. As $\kappa_X$ and $\kappa_X^{\vee}$ are adjoint with respect to the Hodge--Riemann pairing, we have the following equality
\[
\langle \omega,{\kappa_X^{\vee}} \circ (h_{\KS}^{2N-2}\cup\bullet)\circ \kappa_X(\overline{\omega})\rangle_X=\langle \kappa_X (\omega),(h_{\KS}^{2N-2}\cup\bullet)\circ \kappa_X(\overline{\omega})\rangle_{\KS(X)}.
\]
The right-hand side is non-zero by the Hodge--Riemann relations, as $0\not =\kappa_X(\omega)\in H^{2,0}(\KS(X))$ by the injectivity of $\kappa_X$. In particular, we conclude that $\kappa_X^{\vee} \circ (h_{\KS}^{2N-2}\cup\bullet)\circ \kappa_X(\overline{\omega})\not =0$. This implies that $\kappa_X^{\vee} \circ (h_{\KS}^{2N-2} \cup \bullet)\circ \kappa_X$ restricted to $T(X)$ is non-zero and proves the first statement of the lemma.

\smallskip
To prove that $\varphi$ is a rational multiple of the identity, let us first assume that $X$ is Mumford--Tate general. In this case, 
$\mathrm{End}_{\mathrm{Hdg}}(T(X))=\Q$. Hence, the statement is obvious since every Hodge automorphism is a rational multiple of the identity. 
For the special case, just note that it is possible to deform in the moduli space of polarized hyperk\"ahler manifolds the pair $(X,h_X)$ to a pair $(X',h_{X'})$, where $X'$ is Mumford--Tate general. Then, as all the maps involved in the definition of $\varphi$ deform in families, the statement for $(X', h_{X'})$ readily implies the statement for $(X, h_X)$.
\end{proof}
\end{lem}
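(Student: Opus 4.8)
The lemma asserts that the composite endomorphism
\[
\varphi=\left.\left(\delta_X\circ\kappa_X^{\vee}\circ(h_{\KS}^{2N-2}\cup\bullet)\circ\kappa_X\right)\right\rvert_{T(X)}\colon T(X)\to T(X)
\]
is a non-zero rational multiple of the identity.

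=== BEGIN PROOF SKETCH ===

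The plan is to separate the argument into two independent halves: first showing that $\varphi$ is a \emph{non-zero} Hodge endomorphism of $T(X)$, and then upgrading ``non-zero Hodge endomorphism'' to ``rational multiple of the identity.'' The second half is actually the cleaner one to dispatch, so let me record the structure. Since $\kappa_X$, $\kappa_X^{\vee}$, $h_{\KS}^{2N-2}\cup\bullet$, and $\delta_X$ are all morphisms of Hodge structures (the first two by construction of the Kuga--Satake correspondence, the third by the hard Lefschetz theorem, the fourth because it is the inverse of the hard Lefschetz isomorphism $h_X^{2n-1}\cup\bullet$), the composite $\varphi$ is a morphism of Hodge structures, i.e.\ an element of $\mathrm{End}_{\mathrm{Hdg}}(T(X))$. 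Because $T(X)$ is an irreducible Hodge structure, this endomorphism algebra is a field, so every non-zero element is automatically an automorphism; thus it suffices to prove $\varphi\neq 0$.

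The heart of the matter is therefore the non-vanishing, and here I would use a Hodge--Riemann positivity argument. First I would reduce away the harmless factor $\delta_X$, which is an isomorphism, so $\varphi\neq 0$ is equivalent to the non-vanishing of $\left(\kappa_X^{\vee}\circ(h_{\KS}^{2N-2}\cup\bullet)\circ\kappa_X\right)\rvert_{T(X)}$. To test non-vanishing I would pair against the symplectic class: let $\omega\in H^{2,0}(X)$ be a generator. Using the fact that $\kappa_X$ and $\kappa_X^{\vee}$ are adjoint with respect to the Hodge--Riemann pairings on $X$ and on $\KS(X)$, I would rewrite
\[
\bigl\langle\omega,\kappa_X^{\vee}\circ(h_{\KS}^{2N-2}\cup\bullet)\circ\kappa_X(\overline\omega)\bigr\rangle_X
=\bigl\langle\kappa_X(\omega),(h_{\KS}^{2N-2}\cup\bullet)\circ\kappa_X(\overline\omega)\bigr\rangle_{\KS(X)}.
\]
The right-hand side is exactly the kind of quantity governed by the Hodge--Riemann bilinear relations on the abelian variety $\KS(X)$: the class $\kappa_X(\omega)$ is a non-zero element of $H^{2,0}(\KS(X))$ (non-zero by the injectivity of the Kuga--Satake embedding), and the Hodge--Riemann relations for the ample class $h_{\KS}$ force the associated Lefschetz-twisted pairing of a non-zero $(2,0)$-class with its conjugate to be non-zero. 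Hence the right-hand side is non-zero, so the left-hand side is non-zero, so $\kappa_X^{\vee}\circ(h_{\KS}^{2N-2}\cup\bullet)\circ\kappa_X(\overline\omega)\neq 0$, giving $\varphi\neq 0$.

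Having shown $\varphi$ is a Hodge \emph{automorphism}, it remains to see it is a scalar. The slick way is a deformation argument. If $X$ is Mumford--Tate general then $\mathrm{End}_{\mathrm{Hdg}}(T(X))=\Q$ and there is nothing to prove: every Hodge endomorphism is already a rational scalar. For a special $X$ I would deform the polarized pair $(X,h_X)$ within the moduli space of polarized hyperk\"ahler manifolds to a polarized pair $(X',h_{X'})$ with $X'$ Mumford--Tate general, observing that every ingredient in the definition of $\varphi$—the Kuga--Satake correspondence, the hard Lefschetz isomorphisms for the chosen polarizations, and their inverses—varies in a local system over the family. Consequently the scalar $\varphi$ defines a \emph{flat} endomorphism of the weight-two variation, and at the general fiber it is a rational scalar; by flatness it is the \emph{same} rational scalar at every fiber, in particular at $(X,h_X)$. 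I expect the main obstacle to be purely bookkeeping: making precise that the scalar is locally constant across the family, which amounts to checking that each constituent map extends to a morphism of variations of Hodge structure over a suitable base and that the period map is dominant onto a neighborhood of a Mumford--Tate general point—both standard, but requiring care with the Tate twists and with the fact that $\delta_X$ is an inverse of an isomorphism of variations rather than an algebraic map.

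=== END PROOF SKETCH ===
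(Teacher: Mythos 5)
Your proposal follows the paper's own proof essentially step for step: the reduction to non-vanishing via irreducibility of $T(X)$, the Hodge--Riemann pairing of $\omega$ against $\overline{\omega}$ through the adjoint pair $\kappa_X$, $\kappa_X^{\vee}$, and the deformation to a Mumford--Tate general member to conclude $\varphi$ is a scalar. It is correct and matches the paper's argument, with only some extra (reasonable) commentary on the bookkeeping of the deformation step.
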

We finally have all tools to show the main theorem of this section:
\begin{thm}
\label{main thm}
Let $X'$ and $X$ be two hyperk\"ahler manifolds for which the Kuga--Satake Hodge conjecture holds.
Then, for every Hodge similarity $\psi\colon T(X')\rightarrow T(X)$, the composition 
\[
T(X')\xlongrightarrow{\psi} T(X)\xrightarrow{h^{2n-2}_X\cup\bullet} H^{4n-2}(X,\Q)
\]
is algebraic, where $2n\coloneqq \dim X$.
\begin{proof}
By the functoriality of the Kuga--Satake correspondence of Propositon \ref{prop. similarities and KS}, the similarity $\psi\colon T(X')\rightarrow T(X)$ induces an isogeny $\psi_{\KS}\colon\KS(X')\rightarrow\KS(X)$ such that the induced isomorphism 
\[
(\psi_{\KS})^{\otimes 2}_*\colon H^1(\KS(X'),\Q)^{\otimes2}\rightarrow H^1(\KS(X),\Q)^{\otimes2}
\]
makes the following diagram commute:
\begin{equation}
\label{Square 1}
\begin{tikzcd}
T(X') \arrow[d, hook, "\kappa_{X'}"] \arrow[r, "\psi"]  & T(X) \arrow[d, hook, "\kappa_X"] \\
{H^1(\KS(X'),\Q)^{\otimes2}} \arrow[r, "(\psi_{\KS})^{\otimes 2}_*"] & {H^1(\KS(X),\Q)^{\otimes2}}   
\end{tikzcd},
\end{equation}
where $\kappa_{X'}$ (resp., $\kappa_{X}$) is the Kuga--Satake correspondence for $T(X')$ (resp., $T(X))$.
By Lemma \ref{lem.: commutative square}, the automorphism $\varphi$ of $T(X)$ makes the following diagram commute
\begin{equation}
\label{Square 2}
\begin{tikzcd}
T(X) \arrow[d, "\kappa_{X} ", hook] \arrow[r, "(h_X^{2n-2}\cup \bullet)\circ\varphi"] & H^{4n-2}(X,\Q) \\
{H^2(\KS(X)^2,\Q)} \arrow[r, "h_{\KS}^{2N-2}\cup \bullet"]  & {H^{2N-2}(\KS(X)^2,\Q)} \arrow[u, "{\kappa_X^{\vee}} ", two heads]
\end{tikzcd}.
\end{equation}
The commutativity of the squares (\ref{Square 1}) and (\ref{Square 2}) implies that the following equality holds
\[
(h_X^{2n-2}\cup \bullet)\circ\varphi\circ \psi= {\kappa_X^{\vee}}\circ (h_{\KS}^{N-2}\cup \bullet)\circ (\psi_{\KS})^{\otimes 2}_*\circ \kappa_{X'}.
\]
Note that the right-hand side of the equality is algebraic: $(\psi_{\KS})^{\otimes 2}_*$ is induced by an isogeny of abelian varieties, and $\kappa_X^{\vee}$ and $\kappa_{X'}$ are algebraic by assumption. We then conclude that the composition \[(h_X^{2n-2}\cup \bullet)\circ\varphi\circ \psi\colon T(X')\rightarrow H^{4n-2}(X,\Q)\] is algebraic as well.
I.e., there exists a cycle $\Gamma\in \mathrm{CH}^*(X'\times X)$ inducing the morphism
\[
[\Gamma]_*=(h_X^{2n-2}\cup \bullet)\circ \varphi\circ \psi\colon T(X')\rightarrow H^{4n-2}(X,\Q).
\]
By Lemma \ref{lem.: commutative square}, the automorphism $\varphi$ is by equal to $\lambda \mathrm{Id}_{T(X)}$ for some non-zero $\lambda\in \Q$. Therefore, the class $\Gamma/\lambda\in \mathrm{CH}^*(X'\times X)$ induces the morphism 
\[
T(X')\xrightarrow{\psi} T(X)\xrightarrow{h_X^{2n-2}\cup \bullet} H^{4n-2}(X,\Q).
\]
This concludes the proof.
\end{proof}
\end{thm}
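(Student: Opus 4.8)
The plan is to assemble the two functoriality diagrams already at our disposal and to exploit the fact that the endomorphism $\varphi$ of Lemma \ref{lem.: commutative square} is a scalar. First I would apply Proposition \ref{prop. similarities and KS} to the Hodge similarity $\psi\colon T(X')\to T(X)$, obtaining an isogeny $\psi_{\KS}\colon\KS(X')\to\KS(X)$ and the commutative square
\[
\kappa_X\circ\psi=(\psi_{\KS})^{\otimes 2}_*\circ\kappa_{X'},
\]
which relates $\psi$ to the map induced by $\psi_{\KS}$ through the Kuga--Satake embeddings. (Here one uses that $\KS(H^2(X,\Q))$ is isogenous to a power of $\KS(T(X))$ and that $\kappa_X$ restricts to $T(X)$, so the functoriality of Proposition \ref{prop. similarities and KS} applies directly.) Independently, Lemma \ref{lem.: commutative square} provides the endomorphism $\varphi$ of $T(X)$ fitting into its own commutative square and, crucially, identifies it with $\lambda\,\mathrm{Id}_{T(X)}$ for some nonzero $\lambda\in\Q$.

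Next I would glue the two squares. Restricting Lemma \ref{lem.: commutative square} to $T(X)$ gives $(h_X^{2n-2}\cup\bullet)\circ\varphi=\kappa_X^{\vee}\circ(h_{\KS}^{2N-2}\cup\bullet)\circ\kappa_X$; precomposing with $\psi$ and substituting the square above yields the key identity
\[
(h_X^{2n-2}\cup\bullet)\circ\varphi\circ\psi=\kappa_X^{\vee}\circ(h_{\KS}^{2N-2}\cup\bullet)\circ(\psi_{\KS})^{\otimes 2}_*\circ\kappa_{X'}.
\]
The point of this identity is that each factor on the right is algebraic: $\kappa_{X'}$ and $\kappa_X^{\vee}$ are algebraic by the Kuga--Satake Hodge conjecture for $X'$ and $X$ (recall that $\kappa_X^{\vee}$ is the transpose of $\kappa_X$, so the two share algebraicity), the map $(\psi_{\KS})^{\otimes 2}_*$ is induced by an isogeny of abelian varieties and is therefore given by an algebraic correspondence, and cup product with the power $h_{\KS}^{2N-2}$ of an ample class is algebraic. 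Hence the left-hand side is realized by a cycle $\Gamma\in\mathrm{CH}^*(X'\times X)$.

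Finally, since $\varphi=\lambda\,\mathrm{Id}_{T(X)}$ with $\lambda\neq0$, the left-hand side equals $\lambda\,(h_X^{2n-2}\cup\bullet)\circ\psi$. Dividing the cycle $\Gamma$ by $\lambda$ then produces an algebraic cycle inducing the composition $(h_X^{2n-2}\cup\bullet)\circ\psi\colon T(X')\to H^{4n-2}(X,\Q)$, which is exactly the assertion.

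I expect the genuine content to be concentrated in the two inputs I invoke rather than in the assembly, so the main obstacle is conceptual: the strong Lefschetz operator $h_{\KS}^{2N-2}\cup\bullet$ on the Kuga--Satake side has no reason to correspond to $h_X^{2n-2}\cup\bullet$ on $X$, and it is precisely Lemma \ref{lem.: commutative square} that measures their discrepancy as a single rational scalar $\lambda$. Recognizing that one must thread $\varphi$ through the computation in order to cancel this scalar and land on the Lefschetz isomorphism of $X$ itself is the crux; once this is in place, the algebraicity of the right-hand side and the division by $\lambda$ are routine.
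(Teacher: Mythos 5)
Your proposal is correct and follows essentially the same route as the paper: combine the functoriality square from Proposition \ref{prop. similarities and KS} with the square of Lemma \ref{lem.: commutative square} to express $(h_X^{2n-2}\cup\bullet)\circ\varphi\circ\psi$ as a composition of algebraic maps, then use $\varphi=\lambda\,\mathrm{Id}_{T(X)}$ to divide out the scalar. The only cosmetic difference is that the paper phrases the gluing via the two labelled diagrams rather than the single displayed identity, but the content is the same.
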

This shows that, if the Kuga--Satake correspondence is algebraic, then every Hodge similarity is algebraic after composing it with the Lefschetz isomorphism. The Lefschetz standard conjecture in degree two for $X$ predicts that the inverse of
$h_X^{2n-2}\cup\bullet\colon H^2(X,\Q)\rightarrow H^{4n-2}(X,\Q)$
is algebraic.
If $X$ satisfies this conjecture, Theorem \ref{main thm} gives the following:
\begin{cor}
\label{main cor}
Let $X$ and $X'$ be hyperk\"ahler manifolds satisfying the Kuga--Satake Hodge conjecture. Assume moreover that $X$ satisfies the  Lefschetz standard conjecture in degree two. Then, every Hodge similarity $\psi\colon T(X')\rightarrow T(X)$ is algebraic. \qed
\end{cor}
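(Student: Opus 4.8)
The plan is to deduce the corollary formally from Theorem \ref{main thm}, using the Lefschetz standard conjecture only to invert the Lefschetz operator appearing there. Theorem \ref{main thm} already furnishes, for any Hodge similarity $\psi\colon T(X')\to T(X)$, a cycle $\Gamma\in\mathrm{CH}^*(X'\times X)$ whose associated cohomological correspondence is the composition
\[
(h_X^{2n-2}\cup\bullet)\circ\psi\colon T(X')\longrightarrow H^{4n-2}(X,\Q);
\]
the remaining task is to strip off the Lefschetz factor on the left.

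First I would invoke the hypothesis that $X$ satisfies the Lefschetz standard conjecture in degree two, which is exactly the assertion that the inverse of the hard Lefschetz isomorphism $h_X^{2n-2}\cup\bullet\colon H^2(X,\Q)\xrightarrow{\sim}H^{4n-2}(X,\Q)$ is algebraic, say induced by a cycle $\Delta\in\mathrm{CH}^*(X\times X)$. I would then form the composite correspondence $\Delta\circ\Gamma\in\mathrm{CH}^*(X'\times X)$. On the level of cohomology this induces
\[
(h_X^{2n-2}\cup\bullet)^{-1}\circ(h_X^{2n-2}\cup\bullet)\circ\psi=\psi,
\]
so $\psi\colon T(X')\to T(X)$ is algebraic, as desired.

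The only point to verify is the standard compatibility that the convolution of two algebraic (Chow) correspondences is again algebraic and realizes the composite of the two cohomological maps; this presents no genuine difficulty. Indeed there is essentially no obstacle here: the entire content of the corollary lies in Theorem \ref{main thm} (which in turn rests on the Kuga--Satake Hodge conjecture for $X$ and $X'$ through Proposition \ref{prop. similarities and KS} and Lemma \ref{lem.: commutative square}), while the Lefschetz standard conjecture contributes only the algebraicity of $(h_X^{2n-2}\cup\bullet)^{-1}$. The corollary is therefore a purely formal consequence of the two hypotheses.
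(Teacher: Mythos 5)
Your proposal is correct and is exactly the argument the paper intends: the corollary is stated with a \qed precisely because it follows by composing the algebraic correspondence produced by Theorem \ref{main thm} with the algebraic inverse of $h_X^{2n-2}\cup\bullet$ furnished by the Lefschetz standard conjecture in degree two. Nothing further is needed.
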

\section{Applications}
\label{Sec. applications}
In this section, we recall the main cases where the Kuga--Satake Hodge conjecture has been proven and the cases in which the Lefschetz standard conjecture in degree two is known to hold. This way, we describe examples of applications of Theorem \ref{main thm} and Corollary \ref{main cor}.

\smallskip
As mentioned in the introduction, Hodge similarities between transcendental lattices of hyperk\"ahler manifolds appear naturally in two cases: as elements of totally real endomorphism fields of degree two, and as Hodge isomorphisms $T(Y)\rightarrow T(X)$, where $X$ and $Y$ are hyperk\"ahler manifolds with $T(Y)$ Hodge isometric to $T(X)(\lambda)$ for some $\lambda\in \Q_{>0}$.
\smallskip

Let us start from the case of hyperk\"ahler manifolds of generalized Kummer type. For these varieties the Kuga--Satake Hodge conjecture is proven in \cite{voisin2022footnotes} and the Lefschetz standard conjecture in degree two has been proven in \cite{foster2023lefschetz}.
We thus get our main families of examples of varieties satisfying the hypotheses of Corollary \ref{main cor}, and we conclude that every Hodge similarity between the transcendental lattices of two hyperk\"ahler manifolds of generalized Kummer type is algebraic. Note that the dimension of the transcendental lattice of a hyperk\"ahler manifold of generalized Kummer type is at most six-dimensional. Therefore, its endomorphism field is either a CM field or a totally real field of degree one or two. In all cases, we see that it is always generated by Hodge similarities. We therefore deduce the following:
\begin{thm}
\label{thm. similarities and gen kummer}
Let $X$ and $X'$ be hyperk\"ahler manifolds of generalized Kummer type such that $T(X)$ and $T(X')$ are Hodge similar. Then, every Hodge morphism between $T(X')$ and $T(X)$ is algebraic.
\end{thm}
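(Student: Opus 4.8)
The plan is to reduce the statement to Corollary \ref{main cor} together with the structure of the endomorphism field $E\coloneqq\mathrm{End}_{\mathrm{Hdg}}(T(X))$. For manifolds of generalized Kummer type both hypotheses of that corollary are available: the Kuga--Satake Hodge conjecture holds by Voisin \cite{voisin2022footnotes}, and the Lefschetz standard conjecture in degree two holds by Foster \cite{foster2023lefschetz}. First I would fix one Hodge similarity $\psi_0\colon T(X')\rightarrow T(X)$, which exists precisely because $T(X)$ and $T(X')$ are assumed Hodge similar. Since $T(X')$ and $T(X)$ are irreducible Hodge structures of the same dimension, every nonzero element of $\mathrm{Hom}_{\mathrm{Hdg}}(T(X'),T(X))$ is an isomorphism, and postcomposition with $\psi_0^{-1}$ identifies this $\Q$-vector space with $E$: every Hodge morphism $\varphi$ is uniquely of the form $\varphi=e\circ\psi_0$ with $e=\varphi\circ\psi_0^{-1}\in E$.

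Next I would observe that the elements of $\mathrm{Hom}_{\mathrm{Hdg}}(T(X'),T(X))$ that are algebraic form a $\Q$-linear subspace, so it suffices to produce a $\Q$-spanning set of $E$ consisting of elements $e$ for which $e\circ\psi_0$ is algebraic. The key point is that if $e\in E$ is \emph{itself} a Hodge similarity of $T(X)$, say of multiplier $\mu$, then $e\circ\psi_0$ is again a Hodge similarity $T(X')\rightarrow T(X)$, now of multiplier $\mu\,\lambda_{\psi_0}$, and hence algebraic by Corollary \ref{main cor}. Consequently the entire $\mathrm{Hom}$-space is algebraic as soon as $E$ is generated over $\Q$ by Hodge similarities; in particular $\psi_0$ itself is recovered from the case $e=\mathrm{Id}_{T(X)}$.

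It therefore remains to verify the generation statement, and this is the step where the restriction to generalized Kummer type enters: the transcendental lattice is at most six-dimensional, so $E$ is either $\Q$, a totally real field of degree two, or a CM field. If $E=\Q$ the identity generates. If $E=\Q(\sqrt{d})$ is totally real of degree two, the Rosati involution is trivial, so both $1$ and $\sqrt{d}$ are Hodge similarities (of multipliers $1$ and $d$) and they span $E$. If $E$ is CM, then the Rosati involution is complex conjugation $c$, and an element $e\in E$ is a Hodge isometry exactly when $c(e)e=1$, i.e.\ when it has norm one; I would argue that these norm-one elements span $E$ over $\Q$ by passing to $E\otimes_\Q\C\cong\prod_\sigma\C$, where the norm-one condition reads $x_\sigma x_{\bar\sigma}=1$ on each conjugate pair of coordinates. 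Such a product of affine hyperbolas spans $E\otimes_\Q\C$ linearly, and since the norm-one torus is defined over $\Q$ with a rational point its $\Q$-points already span $E$.

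I expect the only genuine obstacle to be this last, purely field-theoretic generation step in the CM case, since all the geometric input is packaged in Corollary \ref{main cor}: once one knows that composing any similarity of $T(X)$ with $\psi_0$ lands in the class of similarities treated by that corollary, the conclusion follows formally from the fact that algebraic correspondences constitute a $\Q$-subspace of $\mathrm{Hom}_{\mathrm{Hdg}}(T(X'),T(X))$ spanned by the images of a generating set of $E$.
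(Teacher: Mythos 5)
Your proposal is correct and follows essentially the same route as the paper: apply Corollary \ref{main cor} (Kuga--Satake Hodge conjecture by Voisin, Lefschetz standard conjecture in degree two by Foster) to get algebraicity of all Hodge similarities $T(X')\rightarrow T(X)$, then use that $\dim T(X)\le 6$ forces $E$ to be $\Q$, real quadratic, or CM, hence spanned by Hodge similarities, so that every Hodge morphism $e\circ\psi_0$ is a $\Q$-linear combination of algebraic ones. The only difference is that you spell out the identification of $\mathrm{Hom}_{\mathrm{Hdg}}(T(X'),T(X))$ with $E$ and the CM generation step (where density of the $\Q$-points of the norm-one torus follows from its unirationality), details the paper leaves implicit.
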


\begin{rmk}
\label{rmk. Gen kummer of different dimension}
Taking $X=X'$ in Theorem \ref{thm. similarities and gen kummer}, we see that every Hodge morphism in $E\coloneqq\mathrm{End}_{\mathrm{Hdg}}(T(X))$ is algebraic.
Note that, Theorem \ref{thm. similarities and gen kummer} also covers the case where $X$ and $X'$ are hyperk\"ahler manifolds of generalized Kummer type with Hodge similar transcendental lattice but of different dimension. Let us briefly recall why this happens: recall that the second cohomology group of a hyperk\"ahler manifold $X$ of generalized Kummer type of dimension $2n$ satisfies
\[
(H^2(X,\Q),q_X)\simeq U^{\oplus 3}_\Q \oplus \Q\delta_n,
\]
where $(\delta_n)^2=-2(n+1)$.
Let $k$ be a positive integer. Using the fact that $U_\Q$ is isometric with $U_\Q(k)$, one sees that there is an isometry
\[
U^{\oplus 3}_\Q \oplus \Q\delta_{n'}\rightarrow (U^{\oplus 3}_\Q \oplus \Q\delta_n)(k),
\]
for $n'\coloneqq k(n+1)-1$.
In other words, there is a similarity
\[
\psi\colon U^{\oplus 3}_\Q \oplus \Q\delta_{n'}\rightarrow U^{\oplus 3}_\Q \oplus \Q\delta_n.
\]
Let $[\sigma']\in \mathbb{P}((U^{\oplus 3}_\Q \oplus\Q \delta_{n'})\otimes_\Q \C)$ be a class satisfying $(\sigma')^2=0$ and $(\sigma',\overline{\sigma'})>0$. Then, $[\sigma']$ determines a Hodge structure on the quadratic space $U^{\oplus 3}_\Q \oplus\Q\delta_{n'}$. Similarly the class $[\psi(\sigma')]$ satisfies the same hypotheses of $\sigma'$, hence, it defines a Hodge structure on $U^{\oplus 3}_\Q \oplus \Q\delta_{n}$. By the surjectivity of the period map, we obtain a hyperk\"ahler manifold $X'$ of $\mathrm{Kum}^{n'}$-type whose symplectic form is given by $\sigma'$ and a hyperk\"ahler manifold $X$ of $\mathrm{Kum}^{n}$-type whose symplectic form is given by $\psi(\sigma)$. By construction, the morphism $\psi$ defines a Hodge similarity between $T(X')$ and $T(X)$. Thus, $X$ and $X'$ satisfy the hypotheses for Theorem \ref{thm. similarities and gen kummer}, and we conclude that any Hodge morphism $T(X')\rightarrow T(X)$ is algebraic.
\end{rmk}

Let us briefly comment on the application of our result to the case of K3 surfaces. In this case, the Lefschetz standard conjecture is trivially true. Hence, applying Corollary \ref{main cor}, we get the following:
\begin{thm}
\label{thm. for K3 surfaces}
Let $S$ and $S'$ be K3 surfaces for which the Kuga--Satake Hodge conjecture holds. Then, every Hodge similarity between $T(S)$ and $T(S')$ is algebraic. \qed
\end{thm}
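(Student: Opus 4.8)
The plan is to deduce the statement directly from Corollary \ref{main cor}. Both $S$ and $S'$ are assumed to satisfy the Kuga--Satake Hodge conjecture, so the only hypothesis of that corollary left to check is that $S$ satisfies the Lefschetz standard conjecture in degree two. The whole content of the argument therefore reduces to observing that this conjecture is automatic for a K3 surface, after which the corollary applies verbatim.

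First I would record the numerology. For a K3 surface one has $\dim S = 2$, so in the notation of Theorem \ref{main thm} and Corollary \ref{main cor} we have $2n = 2$ and hence $2n-2 = 0$. Consequently the hard Lefschetz operator appearing in the Lefschetz standard conjecture in degree two, namely
\[
h_S^{2n-2}\cup\bullet\colon H^2(S,\Q)\longrightarrow H^{4n-2}(S,\Q)=H^2(S,\Q),
\]
is simply $h_S^{0}\cup\bullet = \mathrm{Id}_{H^2(S,\Q)}$. The Lefschetz standard conjecture in degree two asks that the inverse of this operator be algebraic; but the inverse of the identity is the identity, which is induced by (the relevant K\"unneth component of) the diagonal class in $\mathrm{CH}^2(S\times S)$ and is therefore algebraic. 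Thus the conjecture holds trivially for $S$, the hypotheses of Corollary \ref{main cor} are met, and that corollary yields the algebraicity of every Hodge similarity $\psi\colon T(S')\rightarrow T(S)$.

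Equivalently, I could bypass the corollary and invoke Theorem \ref{main thm} directly: since $h_S^{2n-2}\cup\bullet = \mathrm{Id}$ for a surface, the composition $(h_S^{2n-2}\cup\bullet)\circ\psi$ furnished by that theorem collapses to $\psi$ itself, so the algebraicity of the composition is already the algebraicity of $\psi$. There is essentially no obstacle here; the substance of the result sits in Theorem \ref{main thm} and in the functoriality of the Kuga--Satake construction with respect to similarities (Proposition \ref{prop. similarities and KS}). The only point worth isolating, and the one I would state explicitly, is the degeneration of the Lefschetz operator in the surface case, which is exactly what makes the Lefschetz standard conjecture vacuous for K3 surfaces.
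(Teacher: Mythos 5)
Your proposal is correct and follows exactly the paper's own route: the paper likewise notes that the Lefschetz standard conjecture is trivially true for K3 surfaces (since $h_S^{2n-2}\cup\bullet$ is the identity when $2n=2$) and then applies Corollary \ref{main cor}. Your explicit spelling-out of the degeneration of the Lefschetz operator is a fine elaboration but not a different argument.
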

For K3 surfaces, the Kuga--Satake Hodge conjecture is in general not known. However, in \cite{floccari2022sixfolds}, the author proves it for the (countably many) four-dimensional families of K3 surfaces with transcendental lattice isometric to $T(K)(2)$ for a hyperk\"ahler manifold $K$ of generalized Kummer type of dimension six.
By \cite[Sec.\ 3]{van2008real}, there are one-dimensional subfamilies in these four-dimensional families of K3 surfaces which have totally real endomorphism field of degree two. As totally real fields of degree two are generated by Hodge similarities, Theorem \ref{thm. for K3 surfaces} proves the Hodge conjecture for the square of these K3 surfaces. As mentioned in the introduction, the Hodge conjecture for all powers of these particular K3 surfaces has been proven in \cite{varesco2022hodge} by extending the techniques introduced in \cite{schlickewei2010hodge}. The proof we provided here for the square of these K3 surfaces is however more direct since it does not involve the study of the Hodge conjecture for the Kuga--Satake varieties.

\smallskip
Finally, let us come to the case of hyperk\"ahler manifolds of $\mathrm{K3}^{[n]}$-type.
As mentioned in the introduction, the Lefschetz standard conjecture holds for these manifolds by \cite{charles2013standard}. Therefore, applying Corollary \ref{main cor}, we get the following:
\begin{thm}
\label{thm. for HK of Hilbert type}
Let $X$ and $X'$ be hyperk\"ahler manifolds of $\mathrm{K3}^{[n]}$- and $\mathrm{K3}^{[n']}$-type for which the Kuga--Satake Hodge conjecture holds. Then, every Hodge similarity between $T(X')$ and $T(X)$ is algebraic.\qed
\end{thm}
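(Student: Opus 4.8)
The plan is to read the statement directly off Corollary~\ref{main cor}, so almost all of the work has already been done. That corollary requires two inputs: that both manifolds satisfy the Kuga--Satake Hodge conjecture, and that the target manifold satisfies the Lefschetz standard conjecture in degree two. The first is granted by hypothesis, so only the second needs to be supplied.

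First I would verify the Lefschetz input. Since $X$ is of $\mathrm{K3}^{[n]}$-type, the Lefschetz standard conjecture in degree two holds for $X$ by Charles--Markman~\cite{charles2013standard}; equivalently, the inverse of $h_X^{2n-2}\cup\bullet\colon H^2(X,\Q)\to H^{4n-2}(X,\Q)$ is induced by an algebraic cycle. This is precisely what is needed to promote the conclusion of Theorem~\ref{main thm} --- which only provides algebraicity of a Hodge similarity \emph{after} composing with $h_X^{2n-2}\cup\bullet$ --- to algebraicity of the similarity itself. With both hypotheses in hand, applying Corollary~\ref{main cor} yields that every Hodge similarity $\psi\colon T(X')\to T(X)$ is algebraic.

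To also cover similarities in the reverse direction, I would interchange the roles of $X$ and $X'$: as $X'$ is of $\mathrm{K3}^{[n']}$-type it likewise satisfies the Lefschetz standard conjecture in degree two by~\cite{charles2013standard}, so Corollary~\ref{main cor} with $X'$ as the target gives algebraicity of every Hodge similarity $T(X)\to T(X')$. There is no genuine obstacle at this stage: the entire difficulty is already absorbed into the functoriality of the Kuga--Satake construction (Proposition~\ref{prop. similarities and KS}) and into Theorem~\ref{main thm}, both of which are available. The only point to confirm is that both deformation types $\mathrm{K3}^{[n]}$ and $\mathrm{K3}^{[n']}$ fall under the result of~\cite{charles2013standard}, which holds regardless of the values of $n$ and $n'$.
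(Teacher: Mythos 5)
Your proposal is correct and matches the paper's own (very short) argument exactly: the Lefschetz standard conjecture in degree two for $\mathrm{K3}^{[n]}$-type manifolds is supplied by \cite{charles2013standard}, and the conclusion then follows by applying Corollary~\ref{main cor}. Your additional remark about swapping the roles of $X$ and $X'$ to cover similarities in both directions is a sensible (if implicit in the paper) completion of the same argument.
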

The Kuga--Satake Hodge conjecture has not been proven for hyperk\"ahler manifolds of $\mathrm{K3}^{[n]}$-type. In dimension six, this conjecture follows from the construction in \cite{floccari2022sixfolds} for the families of hyperk\"ahler manifolds of $\mathrm{K3}^{[3]}$-type which are resolution of the quotient of a hyperk\"ahler manifold of generalized Kummer type of dimension six by a symplectic group $G\simeq (\mathbb{Z}/2\mathbb{Z})^5$. This way, we obtain four-dimensional families of hyperk\"ahler manifolds of $\mathrm{K3}^{[3]}$-type which satisfy the hypotheses of Theorem \ref{thm. for HK of Hilbert type}.
\begin{rmk}
Note that Theorem \ref{main thm} and Corollary \ref{main cor} provide algebraic classes on $X'\times X$ whenever the Kuga--Satake correspondence is algebraic for $X$ and $X'$ and the transcendental lattices are Hodge similar. 
This also works when $X$ and $X'$ are not of the same deformation type, and when the dimensions of $X$ and $X'$ are not the same.
\end{rmk}

\bibliography{HyperK}
\bibliographystyle{plain}

\end{document}